\providecommand{\CC}{{\mathbb{C}}}
\providecommand{\RR}{{\mathbb{R}}}
\providecommand{\QT}{{\mathbb{H}}} 
\providecommand{\OT}{{\mathbb{O}}} 
\providecommand{\KD}{{\mathbb{K}}} 
\providecommand{\im}{{\rm Im}} 
\providecommand{\DD}{{\mathcal D}} 
\providecommand{\KK}{{\mathcal K}}
\providecommand{\Lg}{{\mathfrak g}}
\newcommand{\ang}[1]{\langle #1 \rangle} 
\newtheorem{definition}{Definition}
\newtheorem{lemma}[definition]{Lemma}
\newtheorem{proposition}[definition]{Proposition}
\newtheorem{corollary}[definition]{Corollary}
\newtheorem{theorem}[definition]{Theorem}
\begin{document}

\title{Contact Structures of Arbitrary Codimension and Idempotents in the Heisenberg Algebra}
\author{Erik van Erp}
\address{Dartmouth College, 6188, Kemeny Hall, Hanover, New Hampshire, 03755, USA}
\curraddr{University of Hawaii at Manoa, 2565 McCarthy Mall, Honolulu, Hawaii 96822, USA}
\email{jhamvanerp@gmail.com}

\maketitle


\begin{abstract}
A contact manifold is a manifold equipped with a distribution of codimension one that satisfies a ``maximal non-integrability'' condition. 
A standard example of a contact structure is a strictly pseudoconvex CR manifold,
and operators of analytic interest are the tangential Cauchy-Riemann operator $\bar{\partial}_b$ and the Szeg\"o projector onto the kernel of $\bar{\partial}_b$. The Heisenberg calculus is the pseudodifferential calculus originally developed  for the analysis of these operators.

We introduce a ``non-integrability'' condition for a distribution of arbitrary codimension that directly generalizes the definition of a contact structure. 
We call such distributions polycontact structures.
We prove that the polycontact condition is equivalent to the existence of nontrivial projections in the Heisenberg calculus, and show that generalized Szeg\"o projections exist on polycontact manifolds.
We explore some geometrically interesting examples of polycontact structures.
\end{abstract}

\section{Introduction}

Geometers have considered various ways to define contact structures in codimension $3$.
This paper is a contribution to this consideration from an analytic perspective.
We propose here a notion of ``contact structure'' for distributions of {\em arbitrary} codimension.
Roughly speaking, a {\em polycontact structure} is a distribution that is ``maximally twisted'' or ``maximally non-integrable'' (the precise definition is in section \ref{section:def}).
Our definition is a straightforward generalization of the usual notion to higher codimensions.
It is remarkable that this simple-minded generalization is equivalent to a non-trivial analytic property. 

\begin{theorem}\label{thm:main}
Let $M$ be a compact connected manifold with distribution $H\subseteq TM$.
Then there exist projections in the algebra of Heisenberg pseudodifferential operators on $M$
that have infinite dimensional kernel and range
if and only if $H$ is polycontact.
\end{theorem}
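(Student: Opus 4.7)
The plan is to read off the existence of Heisenberg projections from the principal symbol calculus. A Heisenberg pseudodifferential operator $P$ of order $0$ has a principal symbol $\sigma(P)$ which, at each $x \in M$, is an element of a (reduced) $C^*$-algebra $\A_x$ attached to the osculating group $G_x = \TH_x M$ and acting by convolution on it. Two standard facts drive everything: (i) $P^2-P$ is smoothing iff $\sigma(P)^2 = \sigma(P)$ pointwise, and (ii) smoothing operators are compact, so $P$ is a projection with both kernel and range infinite-dimensional iff $\sigma(P)$ is a non-scalar idempotent in $\A_x$ for $x$ in a set of positive measure. The theorem thus reduces to an algebraic question about the fiberwise symbol algebras $\A_x$.

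For the forward implication, assume $H$ is polycontact. I would construct a generalized Szeg\"o projector as follows: choose a sub-Riemannian metric on $H$ together with a compatible ``almost CR-like'' structure, and form a self-adjoint positive sub-Laplacian $L$ of Heisenberg order $2$. The polycontact condition should ensure that at every nonzero characteristic covector $\xi \in H^\perp_x$, the corresponding irreducible representation $\pi_\xi$ of $G_x$ is infinite-dimensional and the symbol $\sigma(L)(\xi) = \pi_\xi(L)$ has discrete spectrum with a positive gap above zero. This generalizes the harmonic-oscillator analysis underlying the classical Szeg\"o projector on a strictly pseudoconvex CR manifold. A Cauchy contour integral of the resolvent $(z-L)^{-1}$ around zero then yields a Heisenberg projection $S$ whose symbol is the fiberwise ground-state projection; since $\sigma(S)(\xi)$ is neither $0$ nor $1$, $S$ has infinite-dimensional kernel and range.

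For the reverse implication I argue by contrapositive. Suppose $H$ is not polycontact. Then at generic $x$ the Levi form degenerates in such a way that the coadjoint orbits of $G_x$ which meet the characteristic cone are of strictly sub-maximal dimension, and the corresponding irreducible representations fail to be square-integrable. Consequently $\A_x$ has a commutative quotient encoding these degenerate directions, and the only self-adjoint idempotents it admits are $0$ and $1$. Any symbol idempotent is therefore locally constant on $M$, and connectedness forces $\sigma(P) \in \{0,1\}$ identically; then $P - \sigma(P)$ is smoothing, so $P$ is finite-rank or finite-corank.

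The main technical obstacle is the forward direction: proving the uniform spectral gap and matching the geometric polycontact condition with the Rockland condition for an appropriately chosen $L$. This in turn rests on the Kirillov-type assertion that polycontactness is equivalent to every nonzero $\xi \in H^\perp$ lying on a maximal-dimensional coadjoint orbit of the corresponding osculating group, so that $\pi_\xi(L)$ has compact resolvent. Once this orbit-theoretic characterization is in place, assembling the fiberwise ground-state projections into a smooth homogeneous symbol and quantizing it are standard operations in the Heisenberg calculus; the reverse direction then becomes a $C^*$-algebraic footnote to the same orbit analysis.
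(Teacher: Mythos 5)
Your overall strategy---reduce everything to idempotents in the fiberwise symbol algebras attached to the tangent groups---is the same as the paper's, but both halves of your argument have issues, and the reverse direction has a genuine gap. Non-polycontactness only guarantees that $\theta\circ B_m$ is degenerate for \emph{some} $(m,\theta)\in S^*N$; the degeneracy locus is closed and may be a single fiber, so your appeal to ``generic $x$'' is unjustified. More seriously, the inference ``$\A_x$ has a commutative quotient, hence its only idempotents are $0$ and $1$, hence the symbol is locally constant on $M$'' does not hold: an algebra with a commutative quotient can contain many nontrivial idempotents (e.g.\ $\KK\oplus C_0(\RR)$), and indeed at a point $x$ where only some covectors $\theta$ are degenerate the symbol algebra still has $\KK$-type summands. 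What is actually true, and what the paper proves, is that at a degenerate $(m,\theta)$ the twisted convolution algebra is $C_0(W^*,\KK)$ with $W\neq 0$, which has \emph{no} nonzero idempotent (idempotents have norm $1$, incompatible with vanishing at infinity on $W^*$); hence the symbol of any Heisenberg idempotent vanishes at that one point of $S^*N$. To propagate this to all of $S^*N$ you need that the fiberwise $C^*$-norm $\|\sigma^\vee(m,\theta)\|$ is continuous and $\{0,1\}$-valued, hence locally constant, together with connectedness of $S^*N$ (not of $M$; and corank one needs a separate argument since $S^*N$ is then disconnected). You also need the preliminary reduction---the paper's Lemma on connectedness of $H^*\setminus 0$---showing that any idempotent is, up to replacing $P$ by $1-P$, in the Hermite ideal, before the fiberwise analysis applies; your ``positive measure'' criterion does not substitute for this.

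In the forward direction your route differs from the paper's and is workable but under-specified. The paper writes down the vacuum (Bargmann--Fok) projection symbol explicitly as a Gaussian, identifies the completed Hermite symbol algebra with $C(S^*N,\KK)$, and lifts the projection using closure of $\Psi^0_H(M)$ under holomorphic functional calculus; no parametrix for a sub-Laplacian is needed. Your contour-integral construction additionally requires (a) an operator whose symbol $\pi_\theta(L)$ has $0$ as an isolated point of its spectrum in every representation---a plain positive sub-Laplacian fails this, since its symbol is a harmonic oscillator with strictly positive spectrum, so you must build a $\Box_b$-type operator from the compatible complex structure; and (b) invertibility of $z-L$ within the calculus on the contour, i.e.\ the full Rockland/parametrix machinery. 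Neither is fatal, but both are substantial and neither is supplied. As written, the proposal correctly identifies the right objects (the symbol algebras and the square-integrability/flat-orbit dichotomy, which is indeed equivalent to nondegeneracy of $\theta\circ B_m$) but does not close either implication.
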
 
Idempotents in the algebra of {\em classical} scalar pseudodifferential operators are either smoothing operators, or of the form $1-x$ where $x$ is smoothing. 
They have either finite dimensional range or finite dimensional kernel. 
But as is well-known, on contact manifolds there exist generalized Szeg\"o projections, which have infinite dimensional kernel and cokernel.
These projections can be  constructed as Heisenberg pseudodifferential operators (see [EM03]).
The construction of generalized Szeg\"o projections generalizes rather easily to polycontact manifolds, and Theorem \ref{thm:main} implies that polycontact manifolds are precisely those for which generalized Szeg\"o projections can be defined in the Heisenberg calculus.
Given this striking analytic fact, we suspect that these structures must have interesting geometric properties as well.

In various ways, polycontact structures are geometrically similar to contact structures.
The Heisenberg group is a standard example of a contact manifold, and nilpotent groups of Heisenberg type are examples of polycontact structures,
and they exist in any codimension.
An observation due to Sean Fitzpatrick is that a distribution $H$ is polycontact iff the normal bundle $N^*\setminus 0$ is a symplectic submanifold of $T^*M$. 
The fact that polycontact manifolds have generalized Szeg\"o projections as well as a symplectization suggests that they fit very nicely in the framework of {\em Toeplitz structures} developed by Boutet de Monvel and Guillemin [BG81],
a connection we will explore elsewhere.
Biquard's {\em quaternionic contact manifolds} are polycontact, and we identify the extra geometric structure (a compatible conformal structure in the distribution $H$) that characterizes quaternionic contact structures among general $3$-polycontact structures.
We suspect that {\em conformal polycontact manifolds} share many of the geometric properties of quaternionic contact manifolds.
It is also interesting that any strictly pseudoconvex real hypersurface in a hypercomplex manifold is $3$-polycontact, while in general it may not be quaternionic contact.
Another example of polycontact structures are the {\em fat bundles} introduced by Alan Weinstein. Roughly speaking, fat bundles have a lot of curvature, as opposed to flat bundles which have no curvature. For example, a bundle with structure group $S^1$ is fat if and only if its curvature defines a symplectic form on the base (which of course means that the total space is contact). We will see that, in general, a connection on a principal bundle is fat if and only if its horizontal distribution is polycontact.

A result of independent interest is a characterization of polycontact structures in terms of the structure theory of the $C^*$-algebra $\Psi_H^*$---the closure in operator norm of the algebra $\Psi^0_H(M)$ of order zero Heisenberg pseudodifferential operators.
We show that for a compact polycontact manifold the $C^*$-algebra $\Psi^*_H$ has a decomposition series consisting of only two sequences,
\[ 0\to I_H^*\to \Psi_H^*\to C(S^*H)\to 0\]
\[ 0\to \KK\to I_H^*\to C(S^*N, \KK)\to 0\]
In [Dy78], Alexander Dynin announced this result for contact manifolds,
and we prove it here for all polycontact manifolds.
Thus, the polycontact condition is equivalent to a simple structural feature of the $C^*$-algebra $\Psi^*_H$.

\vskip 6pt \noindent  {\bf Acknowledgements.}
I thank Sean Fitzpatrick for carefully reading the manuscript, catching some errors,  and suggesting some interesting geometric examples and properties of polycontact structures.
The term polycontact structure was suggested by Alan Weinstein.    
I especially thank the anonymous referee for numerous suggestions that greatly improved the exposition of the paper.

\section{Polycontact Structures}\label{section:def}

A contact form on a manifold $M$ of dimension $2m+1$ ($m\ge 1$) is a $1$-form $\theta$ such that $\theta(d\theta)^m$ is a nowhere vanishing volume form on $M$.
A distribution $H\subset TM$ defines a contact structure on $M$ if it is the kernel of a contact form, at least locally.
We generalize this definition to higher codimensions as follows.
\begin{definition}\label{def1}
A {\em polycontact structure} on a smooth manifold $M$ 
is a distribution $H\subset TM$ (of corank at least $1$)
such that if $\theta$ is any nonzero one-form that vanishes on $H$ ($\theta$ may be defined only locally), then the restriction of $d\theta$ to $H$ is nondegenerate.
\end{definition}
Associated to any distribution $H$ is an anti-symmetric bilinear bundle map
\[ B\;\colon\; H\times H\to N\]
induced by Lie brackets of vector fields, i.e.,
\[ B_m(X(m), Y(m)) = [X,Y](m)\qquad {\rm mod}\,H_m\]
where $m\in M$ and $X,Y$ are two vector fields in $H$.
We will refer to this map as the {\em Levi form} of $H$. 
It is also sometimes referred to as the ``curvature'' or ``twistedness'' of $H$.
The following lemma says that $H$ is polycontact if it is ``maximally twisted''.

\begin{lemma}\label{lemma:equiv}
The following are equivalent:
\begin{enumerate}
\item $H$ defines a polycontact structure.
\item For every $m\in M$ and $\theta\in N_m^*\setminus \{0\}$ the bilinear from $\theta\circ B_m\colon H_m\times H_m\to \RR$ is non-degenerate.
\item The dual Levi form
\[ B^*\;\colon\; N^*\to \Lambda^2 H^*, \quad B_m^*(\theta)=\theta\circ B_m, \;\theta\in N_m^*\]
is injective and maps non-zero vectors in $N_m^*$ to symplectic forms on $H_m$.
\end{enumerate}
\end{lemma}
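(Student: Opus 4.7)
The plan is to establish (1) $\Leftrightarrow$ (2) via a direct translation using Cartan's formula for the exterior derivative, and to observe that (2) $\Leftrightarrow$ (3) by unpacking the definition of a symplectic form as a non-degenerate anti-symmetric bilinear form.

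The key computation is the following. Suppose $\theta$ is a $1$-form defined on a neighborhood of a point $m\in M$ that vanishes on $H$. For vector fields $X,Y$ taking values in $H$, Cartan's formula gives
\[ d\theta(X,Y) = X\theta(Y) - Y\theta(X) - \theta([X,Y]) = -\theta([X,Y]), \]
since $\theta(X)=\theta(Y)=0$. Because $\theta$ vanishes on $H$, the expression $\theta([X,Y])$ depends only on $[X,Y]\bmod H$, so it equals $(\theta\circ B)(X,Y)$ when $\theta$ is viewed as a section of $N^*$. Evaluating at $m$ yields the pointwise identity $d\theta|_{H_m} = -B_m^*(\theta_m)$, where $\theta_m\in N_m^*$ denotes the induced covector.

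With this identity, (1) $\Leftrightarrow$ (2) is essentially immediate. For (1) $\Rightarrow$ (2), given a non-zero $\eta\in N_m^*$, I extend $\eta$ to a local section $\theta$ of the annihilator bundle of $H$ with $\theta|_m=\eta$; condition (1) applied to $\theta$ says $d\theta|_{H_m}$ is non-degenerate, and hence so is $B_m^*(\eta) = -d\theta|_{H_m}$. Conversely, for (2) $\Rightarrow$ (1), any locally defined $1$-form $\theta$ vanishing on $H$ with $\theta|_m\ne 0$ determines $\theta_m\in N_m^*\setminus\{0\}$, so $B_m^*(\theta_m)$ is non-degenerate by (2), whence $d\theta|_{H_m}$ is non-degenerate as well. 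One small point worth verifying is that local sections of the annihilator bundle with prescribed value at $m$ exist, which is standard since $N^*$ is a smooth vector bundle.

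For (2) $\Leftrightarrow$ (3), note that $B_m^*(\eta)$ is by construction an anti-symmetric bilinear form on $H_m$, so \emph{non-degenerate} is synonymous with \emph{symplectic}. Condition (2) therefore states exactly that $B_m^*$ sends every non-zero covector to a symplectic form; injectivity of $B_m^*$ is then automatic because a symplectic form is non-zero. This recovers (3), and the reverse implication is tautological. There is no real obstacle here---the whole lemma is a bookkeeping exercise once the Cartan-formula identification $d\theta|_H = -\theta\circ B$ is in hand.
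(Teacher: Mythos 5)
Your proof is correct and follows essentially the same route as the paper's: the Cartan-formula computation $d\theta(X,Y)=-\theta([X,Y])$ on $H$, identifying $d\theta|_{H_m}$ with $-\theta\circ B_m$, is exactly the paper's argument. You simply spell out the extension/bookkeeping details that the paper leaves implicit.
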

\begin{proof}
From $d\theta(X,Y) = X.\theta(Y) - Y.\theta(X)-\theta([X,Y])$ it follows that for two vector fields $X, Y$ in $H$ we have $d\theta(X,Y) = -\theta([X,Y])$.
Therefore $\theta\circ B_m$ is just the restiction of $-d\theta$ to $H_m$.

\end{proof}

As we shall see, polycontact structures are characterized by the existence of generalized Szeg\"o projections.
From a purely geometric point of view it is interesting that polycontact structures are characterized by the existence of a canonical symplectization. The following observation is due to Sean Fitzpatrick and the proof given here was suggested by Alan Weinstein [Fi11].

\begin{proposition}
Let $H\subset TM$ be a distribution of corank $p\ge 1$.
Then $N^*\setminus 0$ is a symplectic subspace of $T^*M$ (with respect to the canonical symplectic structure of $T^*M$)
if and only if $H$ defines a polycontact structure.
\end{proposition}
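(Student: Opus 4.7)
The plan is to restrict the canonical symplectic form $\omega = d\alpha$ of $T^*M$ to $N^*$ in local coordinates, compute its pointwise radical, and match that radical to the kernel of $\theta\circ B_m|_{H_m}$; Lemma \ref{lemma:equiv} then closes the argument.

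First I would trivialize $N^*$ locally. Near $m\in M$, choose smooth $1$-forms $\theta^1,\dots,\theta^p$ spanning the annihilator of $H$, so that the map $(x,s_1,\dots,s_p)\mapsto (x, s_a\theta^a(x))$ is a local diffeomorphism onto an open set in $N^*$. The tautological $1$-form $\alpha$ on $T^*M$ pulls back to $s_a\theta^a$ along this parametrization, hence
\[
\omega|_{N^*} \;=\; ds_a\wedge\theta^a \,+\, s_a\,d\theta^a.
\]
At a point $(m,\theta)\in N^*\setminus 0$ with $\theta = s_a\theta^a(m)$, for tangent vectors $(X,t),(Y,u)\in T_mM\oplus\RR^p$ this yields
\[
\omega|_{N^*}\bigl((X,t),(Y,u)\bigr) \;=\; t_a\theta^a(Y) \,-\, u_a\theta^a(X) \,+\, s_a\,d\theta^a(X,Y).
\]

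The main step is to identify the radical of $\omega|_{N^*}$ at $(m,\theta)$. Testing a candidate radical vector $(X,t)$ against purely vertical vectors $(0,u)$ with $u$ arbitrary forces $\theta^a(X)=0$ for all $a$, and hence $X\in H_m$. With this, testing against $(Y,0)$ for $Y\in H_m$ reduces to $s_a\,d\theta^a(X,Y)=0$ on $H_m$; by the identity $d\theta|_H = -\theta\circ B_m$ already noted in the proof of Lemma \ref{lemma:equiv}, this is precisely the statement that $X$ lies in the radical of the bilinear form $\theta\circ B_m$ on $H_m$. The remaining freedom in $Y$ pins down $t$ uniquely: linear independence of the $\theta^a$ together with the just-derived radical condition on $X$ guarantees both existence and uniqueness of the solution to $t_a\theta^a(Y) = -s_a\,d\theta^a(X,Y)$ for $Y\in T_mM$. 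Consequently the projection $(X,t)\mapsto X$ is an isomorphism from the radical of $\omega|_{N^*}$ at $(m,\theta)$ onto the radical of $\theta\circ B_m$ on $H_m$.

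It follows that $\omega$ restricts to a nondegenerate $2$-form at every $(m,\theta)\in N^*\setminus 0$ exactly when $\theta\circ B_m$ is nondegenerate on $H_m$ for every nonzero $\theta\in N_m^*$ and every $m\in M$, which by Lemma \ref{lemma:equiv} is the polycontact condition. The computation is elementary; the only place that demands a moment of care is confirming that the vertical component $t$ of a radical vector is both well-defined and forced by $X$ and $\theta$, but this is immediate from the linear independence of $\theta^1,\dots,\theta^p$.
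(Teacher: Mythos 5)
Your proof is correct, and it takes a genuinely different route from the paper's. The paper realizes $N^*\setminus 0$ as the common zero set of the functions $f_i(m,\theta)=\theta(X_i(m))$ attached to a local frame $X_1,\dots,X_q$ of $H$, and then invokes two standard facts: that a submanifold cut out by defining functions $f_1,\dots,f_q$ is symplectic precisely when the matrix of Poisson brackets $\{f_i,f_j\}$ is nondegenerate, and that $\{f_X,f_Y\}=f_{[X,Y]}$ for such momentum-type functions on $T^*M$; the bracket matrix is then immediately $\theta\circ B_m(X_i,X_j)$. You instead parametrize $N^*$ by a coframe $\theta^1,\dots,\theta^p$ of the annihilator of $H$, pull back the tautological form to get $\omega|_{N^*}=ds_a\wedge\theta^a+s_a\,d\theta^a$, and compute the radical by hand: testing against vertical vectors forces $X\in H_m$, testing against horizontal vectors in $H_m$ forces $X$ into the radical of $\theta\circ B_m$ via $d\theta|_H=-\theta\circ B_m$, and the vertical component $t$ is then uniquely solvable precisely because the functional $Y\mapsto -s_a\,d\theta^a(X,Y)$ annihilates $H_m$ and the $\theta^a$ span $N^*_m$. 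Your approach is more self-contained --- it avoids the Poisson-bracket criterion for symplectic submanifolds --- and it yields slightly more, namely an explicit isomorphism between the radical of $\omega|_{N^*}$ at $(m,\theta)$ and the radical of $\theta\circ B_m$ on $H_m$, rather than just the equivalence of the two nondegeneracy conditions. The paper's argument is shorter at the cost of importing that standard symplectic machinery. Both are complete proofs.
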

\begin{proof}
Let $X_1,\dots, X_q$ be a local frame for the vector bundle $H$.
The corresponding functions $f_i(m,\theta)=\theta(X_i(m))$ for $(m,\theta)\in T^*M$ are defining functions for $N^*\setminus 0\subset T^*M\setminus 0$.
In general, a submanifold of a symplectic manifold with defining function $f_1,\dots, f_q$ is symplectic if and only if the matrix $A_{ij}=\{f_i, f_j\}$ of Poisson brackets is nondegenerate at each point.
Because $\{f_X,f_Y\}=f_{[X,Y]}$, we see that here
\[ A_{ij}(m,\theta)=\{f_i,f_j\}(m,\theta)=\theta([X_i,X_j](m))=\theta\circ B_m(X_i(m),X_j(m))\]
Thus, the matrix $A(m,\theta)$ is nondegenerate if an only if $\theta\circ B_m$ is.
\end{proof}

The local model for contact manifolds is the Heisenberg group with its standard translation invariant contact structure. 
Likewise, polycontact structures are intimately related to certain two-step nilpotent groups. However, for polycontact manifolds nilpotent groups are only `microlocal' models, not necessarily local models as in the case of corank $1$. 

The Levi form can be interpreted as a Lie bracket on $\Lg_m=H_m\times N_m$ (where $N_m$ is central in $\Lg_m$), making $\Lg_m$ into a graded two-step nilpotent Lie algebra.
The corresponding group structure on $G_m=H_m\times N_n$ is defined by
\[ (h, n)(h',n')=(h+h', n+n'+\frac{1}{2}B_m(h,h')),\quad h,h'\in H_m,\; n,n'\in N_m.\]
We refer to the group $G_m$ as the {\em tangent group} of the distribution $H$ at $m\in M$.

On $G_m$ consider the left translation invariant distribution corresponding to the degree 1 subspace $H_m\subset \Lg_m=T_0G_m$.
This distribution is polycontact iff the bracket $B_m$ satisfies the conditions of Lemma \ref{lemma:equiv}.
In this case we will call the group $G_m$ a {\em polycontact group}.
The distributions on the tangent groups $G_m$ are microlocal approximations of the distribution on the manifold $M$, and $(M,H)$ is a polycontact manifold iff all its tangent groups are polycontact.
In corank $1$ the Heisenberg group is the only polycontact group,
but in higher coranks the tangent group may vary from point to point.

Finally, observe that a polycontact structure satisfies the much weaker condition that the bracket $B_m$ is {\em onto}, and therefore  $H+[H,H]=TM$.
Thus $H$ satisfies the H\"ormander bracket condition, which is an important condition in sub-Riemannian geometry.

\section{Examples}

In this section we discuss several examples of polycontact structures.
The natural place to start is to look for nilpotent groups with translation invariant polycontact structure.
Groups of Heisenberg type provide a large class of examples, proving that polycontact structures exist in any codimension.
We then consider geometric structures on manifolds that are closely related to polycontact structures: fat connections, hypersurfaces in quaternionic domains, quaternionic contact manifolds, and polycontact CR structures.

\subsection{Heisenberg Type Groups}\label{s1} 


A nice class of examples of polycontact groups are the {\em Heisenberg type groups} introduced by Aroldo Kaplan in [Ka80].
Kaplan defined these groups in connection with the study of fundamental solutions of sublaplacians on nilpotent groups.
Here we describe examples of Heisenberg type groups with center of arbitrary dimension $p$. These examples illustrate that polycontact structures exist in any codimension.
For a general discussion of Heisenberg type groups and their properties, see [CDR91, Ka80, Ka83].

\begin{definition}\label{H-type}
A simply connected, connected two-step nilpotent Lie group $G=V\times W$ is of {\em Heisenberg type}, or of {\em H-type}, if $[V,V]=W$ and if $V$ and $W$ can be equipped with positive definite inner products such that for every $w\in W$ with $\|w\|=1$ the endomorphism $J_w\in {\rm End}(V)$ defined by
\[ \langle J_w v, v'\rangle = \langle w,[v,v']\rangle\]
is orthogonal. 
\end{definition}
Clearly every Heisenberg type group is polycontact, since $\langle v,[w,w']\rangle = \theta\circ B(w,w')$ with $\theta=\langle v,-\rangle\in W^*$.

Observe that since the Lie bracket is antisymmetric we must have $J_v^T=-J_v$, and so orthogonality of $J_v$ amounts to $J_v^2=-1$.
Definition \ref{H-type} can be made more explicit as follows.
Choosing an orthonormal basis for $W=\RR^p$ amounts to choosing $p$ endomorphisms $J_1,\dots, J_p\in {\rm End}(\RR^q)$ that satisfy the relations
\[ J_k^2=-1,\quad J_k^T=-J_k,\quad J_kJ_l = -J_lJ_k,\quad \forall k\; \forall l\ne k\]
If we let $\omega_k(v,w)=\ang{J_kv, w}$ be the symplectic form associated to $J_k$,
then the bracket of a Heisenberg type Lie algebra is obtained by letting
\[ [v,w] = \left( \,\omega_1(v, w),\, \omega_2(v,w),\, \dots,\, \omega_p(v,w)\, \right) \in \RR^p\,\quad v,w\in \RR^q\]
Note that every {\em polycontact} group can be presented in this way, by choosing $p$ symplectic forms $\omega_1,\omega_2,\dots,\omega_p$ on $\RR^q$.
However, it is not sufficient to choose $p$ linearly independent forms.
Linear independence merely guarantees that any nontrivial linear combination $c_1\omega_1+\cdots +c_p\omega_p$ is non-zero. 
We need to satisfy the much stronger condition that all nontrivial linear combinations $c_1\omega_1+\cdots +c_p\omega_p$ are nondegenerate.

For this reason it is convenient to work with the complex structures $J_k$ instead of the symplectic forms $\omega_k$. 
Choosing orthogonal complex structures $J_k$ that satisfy the desired anticommutation relations amounts to choosing a representation of the {\em Clifford algebra} $Cl(\RR^p)$ on the vector space $\RR^q$.
The Clifford algebra $Cl(\RR^p)$ is the universal (real) algebra generated by vectors $w\in \RR^p$ with relations $w^2 = -\|w\|^2$.
To find $p$ endomorphisms $J_1,\dots,J_p$ with the desired properties,
choose a representation $c\;\colon Cl(\RR^p)\to {\rm End}(\RR^m)$ of the Clifford algebra.
A representation exists for every value of $p$,
and depending on the value of $p$ there exist only one or two {\em irreducible} representations, up to equivalence.
The dimension $m$ of any irreducible representation of $Cl(\RR^p)$ is the smallest number of the form $m=2^{4s+t}$ with $t=0,1,2,3$ such that $p<8s+2^t$. 
In fact, the space $\Lambda^2\RR^q$ contains a $p$-dimensional subspace of symplectic forms iff $q\ge m$ (a result due to Hurwitz, Radon, and Eckmann; see [Hu66] Ch.11, Thm.8.2).
Thus, the irreducible Clifford algebra representations realize the minimal possible value $m=q$, and any $p$-polycontact manifold must at least be of dimension $p+m$.

Now let $J_1,\dots J_p\in {\rm End}(\RR^m)$ be the orthogonal anticommuting complex structures on $\RR^m$ obtained from an irreducible representation of $Cl(\RR^p)$.
Identify $\RR^{mn}=(\RR^m)^n$ by using coordinates $v=(v_1, \dots, v_n)$ with $v_i\in \RR^m$. 
Then we denote by
\[ {\mathcal C}(p, n) =\RR^{mn}\times \RR^p\]
the Heisenberg type group whose bracket is defined by the relation 
\[ [v,w] = \sum_{i=1}^n \left(\,\ang{J_1v_i, w_i},\, \ang{J_2v_i, w_i},\dots,\,\ang{J_pv_i, w_i}\,\right)\,\in \RR^p,\quad v,w\in (\RR^m)^n\]
If $p=1$ the only irreducible representation of $Cl(\RR)=\CC$ is $\RR^2$ with a choice of complex structure $J^2=-1$. 
Identifying $\RR^2=\CC$, $J=i$, we see that $\ang{v,Jw}= \im \,v\bar{w}$,
and the group ${\mathcal C}(1,n)$ is just the Heisenberg group with group operation
\[ (v,w)(v',w')=(v+v', w+w'+\frac{1}{2}\im \left(\sum_{i=1}^n v_i\bar{v'}_i\right)),\qquad v,v'\in \CC^n,\;w,w'\in \RR.\]
The same formula defines the group operation for the quaternionic Heisenberg group $\QT^n\times \im \QT$ and the octonionic Heisenberg group $\OT^n\times \im \OT$.
In fact, we have isomorphisms
\[ {\mathcal C}(1,n)\cong \CC^n\times \im \CC,\quad {\mathcal C}(3,n)\cong \QT^n\times \im \QT,\quad {\mathcal C}(7,n)\cong \OT^n\times \im \OT.\]
Recall that the two non-equivalent irreducible representations of the Clifford algebra $Cl(\RR^3)$ on $\RR^4$ are obtained by identifying $\RR^3=\im \QT$ and then letting $\im \QT$ act by either left multiplication on $\RR^4=\QT$, or by right multiplication with the negative vector.
The groups ${\mathcal C}(3,n)$ that result from either choice of representation are isomorphic.
To see that it is the quaternionic Heisenberg group one only needs to verify that
\[ v\bar{w} = \ang{v, w} + \ang{v, iw}\, i+\ang{v, jw}\, j+\ang{v, kw}\, k,\quad v,w\in \QT\]
To see that ${\mathcal C}(7, n)\cong \OT^n\times \im \OT$ we identify the irreducible representation space $\RR^8$ of the Clifford algebra $Cl(\RR^7)$ with the octonians.
As before, one lets vectors in $\RR^7=\im \OT$ act by (left or right) octonionic multiplication on $\RR^8=\OT$.
While octonion multiplication is not associative, it is {\em bi-associative}, i.e., any sub-algebra generated by two elements is associative (because it is at most four dimensional).
Therefore the desired equality $w(wv)=(w^2)v=-v$ holds for imaginary octonions $w\in \im \OT$ of unit length, because then  $w^2=-\|w\|^2$.
(For a nice discussion of octonions, see [Ba02, CS03].)

The codimension of a polycontact structure does not have to be odd. 
For example, with $\KD=\RR, \CC, \QT, \OT$ let
\[ G=\KD^{2n}\times \KD.\]
Then a two-step nilpotent Lie bracket can be defined by
a kind of generalized `symplectic form' $\omega$, given by
\[ \omega\;\colon \KD^{2n}\times \KD^{2n}\to \KD\;;\; \omega(u,v) = \sum_{i=1}^n\,u_i\,\bar{v}_{i+n}-v_i\,\bar{u}_{i+n}.\]
If $\KD=\RR$ then $\omega$ is the standard symplectic form on $\RR^{2n}$,
and we recover the Heisenberg group.
The groups $\KD^{2n}\times \KD$ are of Heisenberg type, and are therefore examples of nilpotent groups with polycontact structures of codimension $1, 2, 4, 8$ respectively.
We leave it as an exercise to verify that
\[
{\mathcal C}(2, n) \cong \CC^{2n}\oplus \CC,\quad
{\mathcal C}(4, n) \cong \QT^{2n}\oplus \QT,\quad {\mathcal C}(8, n) \cong \OT^{2n}\oplus \OT.
\]


\vskip 6pt
\noindent {\bf Remark.}
Every Heisenberg type group with center of dimension $p$
is associated to a (finite dimensional) representation of the Clifford algebra $Cl(\RR^p)$,
and equivalent representations determine isomorphic groups.
Clifford algebra representations are completely reducible as a direct sum of irreducible representations, and depending on the value of $p$ there are either $1$ or $2$ non-equivalent irreducible representations.
So given the dimension of the group and of the center there are only finitely many Heisenberg type groups.
If $p$ is not equal $3$ modulo $4$ there is only one irreducible representation of $Cl(\RR^p)$, 
and the groups ${\mathcal C}(p,n)$ are the only Heisenberg type groups.
If $p=3$ mod $4$ there are {\em two} nonequivalent irreducible representations of $Cl(\RR^p)$, and the isomorphism class of the corresponding Heisenberg type group $G=\RR^{mn}\times \RR^p$ depends on the type of representation. 
It is not hard to see that the two possible {\em isotypical} representations of $Cl(\RR^p)$ on $\RR^{mn}$ determine isomorphic groups.
These are the groups $\mathcal{C}(p,n)$ discussed here. 
(For the representation theory of Clifford algebras see [Hu66, LM89].). 
\vskip 6pt

At the end of the paper in which Kaplan first introduced groups of Heisenberg type he briefly considered a larger class of two-step nilpotent groups (condition (18) in  [Ka80]) that is easily seen to be equivalent to the class of polycontact groups.
Kaplan's remarks show that polycontact groups are precisely those (connected, simply connected) two-step nilpotent Lie groups for which the sublaplacian is {\em analytically} hypoelliptic.

It is interesting to identify what distinguishes Heisenberg type groups among  all polycontact groups. 
This will help us identify, for example, the difference between a quaternionic contact manifold (modeled on ${\mathcal C}(3,n)\cong \QT^n\times \im \QT$) and a general $3$-polycontact manifold.

\begin{proposition}\label{prop:conf}
Let $G=V\times W$ be a two-step connected, simply-connected nilpotent group. 
Then $G$ is of Heisenberg type if and only if it is polycontact and there exists a conformal structure on $V$ that is compatible with the symplectic forms $\omega_\theta(v,v')=\theta([v,v'])$ for all nonzero $\theta\in W^*\setminus \{0\}$.
\end{proposition}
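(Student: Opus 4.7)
The plan is to study, for a fixed metric $g$ on $V$ representing the given conformal class, the linear map
\[ W^* \to \mathrm{End}(V),\qquad \theta \mapsto J_\theta,\]
where $J_\theta$ is characterized by $g(J_\theta v, v') = \omega_\theta(v,v') = \theta([v,v'])$. Antisymmetry of $\omega_\theta$ gives $J_\theta^T = -J_\theta$, so compatibility of the conformal class with each $\omega_\theta$ (i.e.\ $J_\theta$ is conformal with respect to $g$) combined with $J_\theta^T = -J_\theta$ forces $J_\theta^2 = -c(\theta)\, I$ for some scalar function $c\colon W^* \to \RR_{\ge 0}$; under the polycontact hypothesis $\omega_\theta$ is nondegenerate for $\theta \ne 0$, hence $J_\theta$ is invertible and $c(\theta) > 0$.

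The forward direction is then almost immediate. If $G$ is H-type, take $g = \langle\,,\,\rangle_V$ and use $\langle\,,\,\rangle_W$ to identify $W$ with $W^*$. Under this identification the endomorphism $J_\theta$ above coincides with Kaplan's $J_w$, and the H-type relation $J_w^2 = -\|w\|_W^2 I$ immediately gives polycontactness together with conformality of $J_\theta$ for every $\theta$, so $[g]$ is a compatible conformal structure on $V$.

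For the reverse direction, the main step is to show that $c$ is a positive definite quadratic form on $W^*$. Using linearity of $\theta \mapsto J_\theta$ and evaluating
\[ (J_{\theta_1} + tJ_{\theta_2})^2 = J_{\theta_1}^2 + t(J_{\theta_1}J_{\theta_2} + J_{\theta_2}J_{\theta_1}) + t^2 J_{\theta_2}^2 = -c(\theta_1 + t\theta_2)\,I \]
at two distinct values of $t$ (and subtracting) forces the anticommutator $J_{\theta_1}J_{\theta_2} + J_{\theta_2}J_{\theta_1}$ to be a scalar multiple of the identity. Writing that scalar as $-2b(\theta_1,\theta_2)$, the bilinearity of the anticommutator in $(\theta_1,\theta_2)$ makes $b$ a symmetric bilinear form on $W^*$, and specializing $t = 1$ in the displayed identity exhibits $c$ as the quadratic form polarizing to $b$; positivity has already been noted.

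With $c$ a positive definite quadratic form on $W^*$, transport it via duality to an inner product on $W$. For $w \in W$ corresponding to $\theta \in W^*$ one then has $\|w\|_W^2 = c(\theta)$, and Kaplan's $J_w$ coincides with $J_\theta$, so $J_w^2 = -\|w\|_W^2\, I$, which verifies the H-type condition on unit vectors. The main obstacle I anticipate is the quadratic-form claim: it relies essentially on conformality of $J_\theta$ holding for \emph{every} nonzero $\theta$ (so that the polynomial identity above is available along every line in $W^*$), rather than merely on a chosen basis of $W^*$, and it is here that the global nature of the compatibility hypothesis is used.
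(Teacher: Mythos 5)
Your proposal is correct and follows essentially the same route as the paper: fix a metric $g$ in the conformal class, define $J_\theta$ by $g(J_\theta v,v')=\omega_\theta(v,v')$, use antisymmetry plus compatibility to get $J_\theta^2=-c(\theta)I$ with $c(\theta)>0$ for $\theta\ne 0$, and then show $c$ is a positive definite quadratic form yielding the inner product on $W$. The only cosmetic difference is in that last step: you obtain the bilinear form by polarizing the anticommutator identity $(J_{\theta_1}+tJ_{\theta_2})^2=-c(\theta_1+t\theta_2)I$, whereas the paper writes it down directly as the normalized trace $\frac{1}{\dim V}\,\mathrm{trace}(J_\theta^T J_{\theta'})$ --- these give the same form.
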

\noindent {\bf Remark.} A symplectic form $\omega$ is compatible with a positive quadratic form $q$ on a vectorspace $V$ if there exists a complex structure $J$ on $V$ ($J^2=-1$) that is orthogonal with respect to $q$ and such that $\omega(v,v')=q(v,Jv')$.
A symplectic form is compatible with a conformal structure if there exists a quadratic form compatible with both.
\vskip 6pt
\begin{proof}
It is obvious from the definition that Heisenberg type groups have a compatible conformal structure in $V$. We need to show the converse.
Suppose that $G$ is polycontact and that $V$ has a compatible conformal structure.
Choose a corresponding positive-definite quadratic form $q$ on $V$
and let $J_\theta\in {\rm End}(V)$ be such that
\[ \omega_\theta(v,v') = q(v,J_\theta v').\]
We have orthogonality $J_\theta^T = -J_\theta$ 
because $\omega_\theta$ is antisymmetric.
If $\theta\ne 0$ then $\omega_\theta$ is nondegenerate, which implies that $J_\theta$ is invertible and $J_\theta^2=-J_\theta^TJ_\theta<0$ is strictly negative. 
Since $\omega_\theta$ is compatible with $q$, it follows that $J_\theta^2$ is a negative scalar multiple of the identity.

The normalized trace
\[ \ang{\theta, \theta'} = \frac{1}{\text{\rm dim}\,V}\,\text{\rm trace}(J_\theta^T J_{\theta'}).\]
defines a positive definite quadratic form on $W$.
If $\ang{\theta, \theta}=1$ we obtain $J_\theta^2 = -J_\theta^T J_\theta = -1$.
This is the defining property of a Heisenberg type group.

\end{proof}

While Kaplan's remark in [Ka80] suggests that the class of polycontact groups is strictly larger than the class of Heisenberg type groups, it is not so easy to construct examples of polycontact groups that are not $H$-type.
While $H$-type groups are `rigid', in the sense that their isomorphism classes come in discrete families, the moduli space of polycontact groups is `wild' (they come in continuous families). 
This question is addressed in [LT99].

\subsection{Strictly Pseudoconvex Hypersurfaces in a Quaternionic Domain}

It seems that any reasonable generalization of contact structures to higher codimensions should at least include the unit sphere in quaternionic vectorspace.
For a point $a\in S^{4m+3}\subset \QT^{m+1}$ let $H_a$ be the largest linear subspace of $T_aS^{4m+3}$ that is invariant under the complex structures $I,J,K$ on $\QT^{4m+3}$.
It is well-known (see [Mo73]) that the manifold with distribution $(S^{4m+3}, H)$ is the one-point compactification of the quaternionic Heisenberg group $\QT^m\times \im \QT$ with its standard codimension 3 distribution,
just as the unit sphere in $\CC^{m+1}$ is contactomorphic to the Heisenberg group $\CC^m\times \RR$.
It follows that $S^{4m+3}$ is a $3$-polycontact manifold.

Denoting $(p,q)=(p_1,p_2,\cdots,p_m,q)\in \QT^{m+1}$ we can write an explicit $3$-contactomorphism as
\[ f\;\colon\; S^{4m+3}\setminus \{(0,-1)\}\to \QT^m\times \im\QT,\quad
f(p, q) = \left( (1+q)^{-1}p, \;\im (1+q)^{-1} \right).\]
The exercise to verify that this is a $3$-contactomorphism is not entirely trivial due to the noncommutativity of quaternion multiplication.
It is, however, rather easy to show that any ellipsoid in $\QT^{m+1}$ is a $3$-polycontact manifold. 
In fact, any strictly pseudoconvex real hypersurface in a quaternionic domain is $3$-polycontact.

\begin{proposition}
Let $M$ be a real codimension one submanifold in a hypercomplex manifold $(X, I, J, K)$
which is strictly pseudoconvex for any of the complex structures $L=aI+bJ+cK$ on $X$ with $a^2+b^2+c^2=1$.
Then the distribution $H$ which is the largest subspace of $TM$ that is invariant under the complex structures $I, J, K$ is a $3$-polycontact structure on $M$.
\end{proposition}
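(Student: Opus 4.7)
The plan is to verify condition (2) of Lemma \ref{lemma:equiv}: for every $m \in M$ and every nonzero $\theta \in N^*_m$, the form $\theta \circ B_m$ on $H_m$ is non-degenerate. Working locally, I would pick a defining function $\rho$ for $M$ in $X$ and, for each complex structure $L$ on $X$, introduce the 1-form $\alpha_L := d\rho \circ L|_{TM}$. Since $d\rho$ vanishes on $TM$ and $LX \in TM$ for every $X \in H$ and every $L \in \{I,J,K\}$, the three forms $\alpha_I, \alpha_J, \alpha_K$ all lie in the annihilator $N^*$ of $H$.

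Next, I would show that $\alpha_I, \alpha_J, \alpha_K$ span $N^*$ pointwise. Suppose $a\alpha_I + b\alpha_J + c\alpha_K$ vanishes on $T_mM$. This means the endomorphism $\widetilde{L} = aI+bJ+cK$ of $T_mX$ preserves $T_mM$. But the anticommutation relations of a hypercomplex structure give $\widetilde{L}^2 = -(a^2+b^2+c^2)\,\mathrm{id}$, so a nonzero $(a,b,c)$ would equip the odd-dimensional real vector space $T_mM$ with a complex structure (up to a positive scalar), which is impossible. Hence $\alpha_I, \alpha_J, \alpha_K$ are linearly independent, $H$ has corank three, and every nonzero $\theta \in N^*_m$ may be written as $\theta = r\,\alpha_L$, where $L = (aI+bJ+cK)/r$ is a genuine complex structure on $X$ with $a^2+b^2+c^2 = r^2$.

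The final step invokes the strict pseudoconvexity hypothesis for this $L$. The CR distribution on $M$ attached to $L$ is $H_L = TM \cap L(TM) = \ker(\alpha_L|_{TM})$, and strict pseudoconvexity says that the Levi form $d\alpha_L|_{H_L \times H_L}$ is a positive-definite Hermitian form on $H_L$ relative to $L$. Now $H \subseteq H_L$, and $H$ is $L$-invariant (being invariant under $I,J,K$ individually), so $H_m$ is a \emph{complex} subspace of $H_{L,m}$. The restriction of a positive-definite Hermitian form to a complex subspace remains positive-definite, hence non-degenerate. Thus $d\alpha_L|_{H_m \times H_m}$ is non-degenerate, and by scaling so is $d\theta|_{H_m \times H_m}$. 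Since $\theta \circ B_m = -d\theta|_{H_m \times H_m}$, as established in the proof of Lemma \ref{lemma:equiv}, condition (2) of that lemma holds and $H$ is polycontact.

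The main obstacle I anticipate is the last passage, from non-degeneracy of the Levi form on the larger CR distribution $H_L$ to non-degeneracy on the smaller polycontact distribution $H$. This is not automatic for a generic non-degenerate 2-form restricted to a subspace, but it goes through here for two reasons: the Levi form is \textbf{definite}, not just non-degenerate, and $H \subseteq H_L$ is a \textbf{complex} subspace for $L$. Everything else is bookkeeping with the defining relations $I^2=J^2=K^2=-1$ and $IJ=K$, plus the linear-algebraic observation that an odd-dimensional real vector space carries no almost complex structure.
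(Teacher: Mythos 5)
Your proof is correct and follows essentially the same route as the paper's: identify $N^*$ as the span of the forms $d\rho\circ L$ and invoke strict pseudoconvexity for each unit $L=aI+bJ+cK$. You additionally justify two points the paper leaves implicit --- that the three forms are pointwise independent (so $H$ has corank $3$), and that definiteness of the Levi form on the larger CR distribution $H_L$ passes to the $L$-invariant complex subspace $H$ --- and your reasoning for both is sound.
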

\begin{proof}
The Levi-form on $H$ for each of the complex structures $L=aI+bJ+cK$
is nondegenerate. 
But the Levi form for $L$ is just the bracket map $B\colon H\times H\to N$
composed with the functional $(a,b,c)$ in an appropriate basis for $N^*$.
Those functionals span $N^*$, and so the bracket $B$ satisfies the $3$-polycontact condition.
\end{proof}

The tangent groups of a strictly pseudoconvex hypersurface in a quaternionic domain are not necessarily isomorphic to the quaternionic Heisenberg group. 
In other words, these hypersurfaces provide examples of $3$-polycontact manifolds that are not quaternionic contact.

\subsection{Quaternionic Contact Manifolds and $3$-Sasakian Manifolds}\label{s2}

Geometers have experimented  with various definitions of contact structures in higher codimension, primarily (or almost exclusively) in codimension three.
As Geiges and Thomas explain in [GT95], these attempts were motivated by the existence of quaternionic analogs of complex and k\"ahler manifolds.
Hypercomplex, quaternionic k\"ahler, and hyperk\"ahler manifolds arise as Riemannian manifolds with special holonomy groups.
Since boundaries of complex manifolds may carry a contact structure,
so boundaries of hypercomplex manifolds should carry a $3$-contact structure of some sort.
The {\em hypercontact} structures of Geiges and Thomas, however, are not necessarily polycontact (although all the key examples of hypercontact manifolds listed in [GT95] are polycontact).

As is well-known, boundaries of rank one symmetric spaces carry a Carnot-Caratheodory metric, and their structure is closely related to specific Heisenberg type groups. 
For example, complex hyperbolic space $M=\CC H^{m+1}$ is a symmetric space with a hyperbolic k\"ahler metric.
The spherical boundary at infinity $\partial M = S^{2m+1}$ has a natural contact structure, induced by the asymptotics of the k\"ahler metric on $\CC H^{m+1}$.
Quaternionic hyperbolic space $M=\QT H^{m+1}$ is a symmetric space with a hyperbolic {\em hyperk\"ahler} metric. 
Just as in the complex case, the boundary sphere $\partial M=S^{4m+3}$ has a natural distribution $H$, which arises by the same asymptotic procedure that defines the contact structure on the boundary of complex hyperbolic space.
This is precisely the standard $3$-polycontact structure on $S^{4m+3}$ discussed above.
The same construction applies to the boundary of octonionic hyperbolic space, i.e., the Cayley plane.
Here we obtain a $7$-polycontact structure on $S^{15}$ isomorphic (up to a point) to the octonionic Heisenberg group $\OT\times \im\OT$.
See [Mo73] for details of the geometry involved in these well-known examples.

An early definition of a generalized contact structure is Ying-yan Kuo's notion of {\em almost contact $3$-structure} [Ku70]. 
Kuo's definition is rather loose and, in general, his ``almost'' contact $3$-structures are not polycontact.
A much more rigid sort of structure, also defined by Kuo, are the {\em 3-Sasakian manifolds}.
A $3$-Sasakian manifold is the boundary of a cone that is hyperk\"ahler (see, for example, [BGM93]).
\footnote{A manifold $M$ with Riemannian metric $g$ is $3$-Sasakian if the holonomy group on the metric cone $M\times \RR_{>0}$ with metric $t^2g+dt^2$ reduces to {\rm Sp}(m+1). In particular, $M\times \RR_{>0}$ is hyperk\"ahler.}
More recently, Olivier Biquard introduced {\em quaternionic contact structures} [Bi99],
loosening the notion of $3$-Sasakian manifolds so that more general  boundaries of hyperbolic hyperk\"ahler spaces are quaternionic contact manifolds.

Biquard's definition is as follows.
\begin{definition}
A {\em quaternionic contact structure} on a smooth manifold $M^{4m+3}$ is a distribution $H\subset TM$ of rorank $3$ equipped with a conformal class of metrics $[g]$; a (locally defined) triple of almost complex structures $J_1, J_2, J_3\in {\rm End}(H)$ satisfying the quaternionic relations $J_1^2=J_2^2=J_3^2=J_1J_2J_3=-1$; and a (locally defined) triple of $1$-forms $\eta=(\eta_1,\eta_2,\eta_3)$ such that $H={\rm ker}\eta$ and such that $\eta_i$ is compatible with the almost complex structure $J_i$,
\[ d\eta_i|_H = g(J_i\,\cdot, \cdot)\qquad i=1,2,3\]
\end{definition}
To understand the definition recall that a manifold with distribution is a contact manifold if and only if all tangent groups are isomorphic to the Heisenberg group $\CC^m\times\im\CC$.
Likewise, it is not hard to see that a manifold with distribution $H$ of corank $3$ is quaternionic contact if and only if all the tangent groups are isomorphic to the quaternionic Heisenberg group $\QT^m\times \im \QT$. 
However, the automorphism group of $\QT^m\times \im \QT$ is strictly larger than $CSp_mSp_1$, which is the structure group of a quaternionic contact distribution $H$. 
To reduce the structure group to $CSp_mSp_1$ (and thus induce a quaternionic contact structure) requires precisely the choice of a compatible conformal structure in $H$.
Thus, a conformal structure in the distribution $H$ is part of the quaternionic contact structure.
\footnote{On a contact manifold a compatible conformal structure in $H$ is equivalent to the choice of a compatible almost CR structure.}

Since the tangent groups are Heisenberg type groups, every quaternionic contact manifold (and, by implication, also every $3$-Sasakian manifold) is a $3$-polycontact manifold.
The converse is not true in general, because the quaternionic Heisenberg group is not the only $3$-polycontact group.
However, Proposition \ref{prop:conf}
implies that in the presence of a compatible conformal structure in $H$,
the tangent groups of a polycontact structure are necessarily Heisenberg type groups.
Let's call a polycontact structure $H\subset TM$ with compatible conformal structure a {\em conformal polycontact structure}.

Not all Heisenberg type groups with center of dimension $3$ are isomorphic to $\QT^m\times \im\QT$,
but there are only finitely many isomorphism classes of Heisenberg type groups of given dimension.
On a connected manifold with conformal polycontact structure all tangent groups are necessarily isomorphic,
and so there are only finitely many `types' of conformal polycontact structure in any given dimension.
It seems worth asking to what extent the other types of conformal $3$-polycontact manifolds (i.e., with tangent groups not isomorphic to $\QT^m\times \im\QT$)
are geometrically similar to quaternionic contact manifolds.

As is the case in many other respects, dimension $7$ is exceptional.

\begin{proposition}\label{prop4-7}
Every $3$-polycontact manifold of dimension $7$ is quaternionic contact.
\end{proposition}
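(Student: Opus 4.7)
I will apply Proposition~\ref{prop:conf}: it suffices to exhibit a compatible conformal structure on $H$ varying smoothly with the basepoint. Granted this, each tangent group $G_m = H_m \times N_m$ is a Heisenberg type group whose horizontal space $V = H_m$ is $4$-dimensional and whose center $W = N_m$ is $3$-dimensional. By the remark in Section~\ref{s1}, the only $H$-type group (up to isomorphism) in this dimension is the quaternionic Heisenberg group $\QT \times \im\QT$, so every tangent group is quaternionic Heisenberg; combined with the conformal structure, this is precisely the data of a quaternionic contact structure on $M$.

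The construction of the compatible conformal structure is a pointwise problem in linear algebra. Fix $m \in M$, write $V = H_m$, and let $\Omega = B_m^*(N_m^*) \subset \Lambda^2 V^*$ be the $3$-dimensional image of the dual Levi form. In dimension $4$ a $2$-form $\omega$ is symplectic iff $\omega \wedge \omega \neq 0$, so by Lemma~\ref{lemma:equiv} every nonzero $\omega \in \Omega$ satisfies $\omega \wedge \omega \neq 0$. After choosing a local orientation of $V$, the wedge product defines a quadratic form $Q(\omega) = \omega \wedge \omega / \mathrm{vol}$ of signature $(3,3)$ on $\Lambda^2 V^*$, and the polycontact condition says that $Q|_\Omega$ is anisotropic. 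Since any indefinite quadratic form on a $3$-dimensional space has a nonzero isotropic vector, $Q|_\Omega$ must be definite; after possibly reversing the orientation, I may take it to be positive definite.

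The remaining ingredient is a classical identification from four-dimensional conformal geometry: a $3$-dimensional positive-definite subspace of $(\Lambda^2 V^*, \wedge)$ is precisely $\Lambda^2_+ g$ for a unique oriented conformal class $[g]$ on $V$. (The Grassmannian $SO(3,3)/SO(3){\times}SO(3)$ of positive $3$-planes and the space $GL_+(V)/CO_+(V)$ of oriented conformal structures are both $9$-dimensional, and the natural map $[g] \mapsto \Lambda^2_+ g$ is a diffeomorphism.) Applied to $\Omega$, this yields the conformal class $[g_m]$: every $\omega \in \Omega$ is then self-dual, hence of the form $g(J\cdot,\cdot)$ for a $g$-orthogonal almost complex structure $J$, which is exactly the compatibility required by Proposition~\ref{prop:conf}. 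The class is independent of the local orientation choice (reversing orientation exchanges $\Omega$ with $\Omega^\perp$ but fixes $[g]$), so it is globally well-defined and depends smoothly on $m$. The main obstacle is the appeal to this last classical identification; an alternative is to pick a $Q$-orthonormal basis $\omega_1,\omega_2,\omega_3$ of $\Omega$ and explicitly construct $J_1,J_2,J_3 \in \mathrm{End}(V)$ obeying the quaternionic relations, in the spirit of the Clifford-module description of Heisenberg type groups in Section~\ref{s1}.
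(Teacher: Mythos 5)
Your proof is correct, and it shares its central computation with the paper's: both arguments rest on the observation that the wedge-square form on $\Lambda^2\RR^4$ has signature $(3,3)$ and that the polycontact condition (via Lemma \ref{lemma:equiv}) forces the $3$-plane $\Omega=B_m^*(N_m^*)$ to be anisotropic, hence definite. From there the two routes diverge. The paper stays at the level of the tangent groups: it uses the two-to-one surjection $SL(4,\RR)\to SO(3,3)$ to see that $SL(4,\RR)$ acts transitively on positive-definite $3$-planes in $\Lambda^2\RR^4$, concludes that every $3$-polycontact group $\RR^4\times\RR^3$ is isomorphic to the quaternionic Heisenberg group $\QT\times\im\QT$, and then relies on the earlier assertion that a corank-$3$ distribution is quaternionic contact iff all tangent groups are of this type. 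You instead invoke the classical correspondence between definite $3$-planes in $\Lambda^2 V^*$ and oriented conformal classes on the $4$-dimensional space $V$ (the self-dual forms $\Lambda^2_+ g$) to manufacture a canonical compatible conformal structure, and then feed it into Proposition \ref{prop:conf} together with the classification of Heisenberg type groups with $\dim V=4$, $\dim W=3$. The two routes are essentially equivalent pieces of four-dimensional linear algebra (transitivity of $SL(4,\RR)$ on definite $3$-planes is the group-theoretic shadow of the $\Lambda^2_+ g$ correspondence), but yours buys something the paper leaves implicit: by the paper's own discussion of Biquard's definition, a compatible conformal structure in $H$ is part of the data of a quaternionic contact structure, and your argument exhibits it canonically and smoothly rather than appealing to an unproved reduction of the structure group. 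One small correction: reversing the orientation of $V$ does not exchange $\Omega$ with its $Q$-orthocomplement --- $\Omega$ is intrinsically defined as the image of $B_m^*$ --- it flips the sign of $Q$, so that $\Omega$ becomes the anti-self-dual forms of the same conformal class; the class $[g_m]$ is still orientation-independent, which is all you need.
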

\begin{proof}
It suffices to show that every polycontact group $G=\RR^4\times \RR^3$ with center of dimension $3$ is isomorphic to the quaternionic Heisenberg group $\QT\times \im\QT$.
To see this, consider the natural quadratic form on $\Lambda^2\RR^4$,
\[ q\;\colon\;\Lambda^2\RR^4\;\otimes\;\Lambda^2\RR^4\to \Lambda^4\RR^4 = \RR\;\colon\; q(\omega, \omega')=\omega\wedge \omega'.\]
This quadratic form is degenerate, with signature $(3,3)$.
A subspace $V\subset \Lambda^2\RR^4$ of dimension $3$ consists of nondegenerate $2$-forms
if and only if the quadratic form $q$ is nondegenerate on $V$.
We may assume it is positive definite.
The action of $SL(4,\RR)$ on $\RR^4$ induces an action on $\Lambda^2\RR^4$
that preserves the quadratic form. In fact, we have
\[ SL(4,\RR)\to SO(3,3),\]
and this map is two-to-one onto.
Thus, we can always pick an element $g\in SL(4,\RR)$ that maps any one subspace $V$ to any other such subspace $V'$. 
Then $g$ implements an isomorphism between the corresponding nilpotent groups.

\end{proof}


\subsection{Fat Bundles}

Fat bundles were introduced by Alan Weinstein 
in connection with the study of Riemannian fibre bundles with totally geodesic fibres and positive sectional curvatures [We80].
Let $G\to P\to M$ be a smooth principal $G$-bundle with connection $1$-form $\omega$.
Recall that curvature $\Omega=d\omega + \frac{1}{2}[\omega,\omega]$ is a horizontal $\Lg$-valued $2$-form on $P$. 
By composing with a functional $\mu\in \Lg^*$ we obtain a horizontal scalar valued $2$-form $\mu\circ \Omega$  on $P$.

Let $H\subset TP$ denote the horizontal bundle, i.e., the kernel of the connection $1$-form $\omega$.
If the restriction of $\mu\circ \Omega$ to $H$ is zero for every $\mu\in \Lg^*$ then the connection $\omega$ is flat.
By contrast, $\omega$ is {\em fat} if the restriction of $\mu\circ \Omega$ to $H$ is nondegenerate for each non-zero $\mu\in \Lg^*\setminus 0$.
Since $\Omega=d\omega$ when restricted to $H$, 
it is immediate from the definitions that $\omega$ is flat iff $H$ is a foliation on $P$, while $\omega$ is fat if and only if $H$ defines a polycontact structure.
(The example of fat bundles was pointed out to me by Sean Fitzpatrick [Fi11].)

A fat principal $S^1$-bundle is the same as a regular contact manifold.
In general, a fat bundle is the same as a polycontact manifold with a transversal principal $G$-action preserving the polycontact distribution.
Various examples of fat bundles (and hence of polycontact manifolds) are discussed in [We80]. In particular, a result of B\'erard Bergery, quoted in [We80], implies that most of the known examples of positively curved manifolds can be given a polycontact structure (at least as of 1980...).

\subsection{CR Structures}

Contact structures arise naturally on the boundary of complex domains.
Let $M=\partial \Omega$ be the boundary of a domain $\Omega$ in a complex manifold. 
Let $H^{0,1}$ be the complex vector bundle on $M$ that is the intersection of the complexified tangent space $TM\otimes \CC$ with the bundle of antiholomorphic vectors $T^{0,1}\Omega$ restricted to $M$. 
The complex bundle $H^{0,1}$ is involutive, in the sense that $[H^{0,1}, H^{0,1}]\subseteq H^{0,1}$,
and is called a {\em CR structure} on $M$.
If the domain $\Omega$ is strictly pseudoconvex, or more generally if the Levi form is nondegenerate, then the real bundle $H$ underlying $H^{0,1}$ is a contact structure on $M$.

In general, a CR structure is a subbundle $H^{0,1}\subseteq TM\otimes \CC$ that is involutive, and such that its complex conjugate $H^{1,0}$ intersects $H^{0,1}$ only in the zero vectors.
The codimension $p$ of such a CR structure can be strictly greater than one.
By definition, the real bundle $H$ underlying a CR structure is polycontact if and only if  the Levi forms $\theta\circ B_m$ are nondegenerate for all $m\in M$, $\theta\in N^*\setminus 0$.
In the context of CR geometry the term `Levi form' usually refers to a {\em hermitian} form on $H^{1,0}$.
The hermitian Levi form on $H^{1,0}$ is the standard hermitian form corresponding to the anti-symmetric bracket $B_m$ that we call `Levi form' here.
More precisely, we identify the bundle $H$ with its complex structure $J\in \mathrm{ End}(H)$ with the complex vector bundle $H^{1,0}$ in the usual way.
If $B_m\,\colon H_m\times H_m\to N_m$ is the bracket that defines the tangent group, as above,
we obtain hermitian forms on $H=H^{1,0}$ by
\[ 
h_\theta(v,w) = \theta[B_m(v,Jw)+iB_m(v,w)]\qquad v,w\in H_m,\; \theta\in N^*_m\setminus 0
\]
Then $H$ is polycontact if and only if all the (hermitian) Levi forms $h_\theta$ are nondegenerate.

In codimension $p=1$ strictly pseudo-convex CR structures are of special interest.
A CR structure of codimension $1$ is strictly pseudoconvex if its (hermitian) Levi form is positive-definite.
Since the sphere $SN^*_m=S^{p-1}$ is connected if $p>1$,
on a polycontact CR structure of codimension $p>1$ all nondegenerate hermitian forms $h_\theta$ must have the same signature.
But because $h_{-\theta}=-h_\theta$ the signature of the Levi forms must then necessarily of type $(n,n)$. 
Thus, while `positive-definiteness' of the Levi forms $h_\theta$ is not possible in codimensions $p>1$,
the polycontact condition {\em can} be satisfied if the Levi forms have split signature.
For example, the Heisenberg type groups $\mathcal{C}(2,n)=\CC^{2n}\times \CC$ are examples of polycontact CR structures of (real) codimension $p=2$,
with Levi forms of signature $(n,n)$.

\section{The Heisenberg Calculus on Polycontact Manifolds}\label{section:Heis}

The Heisenberg calculus is the natural pseudodifferential calculus associated to the pair $(M,H)$. 
It was introduced by Taylor [Ta84] for contact manifolds, developing ideas of Folland and Stein related to the analysis of operators associated to CR structures [FS74].
Beals and Greiner [BG88] develop the calculus for arbitrary distributions of codimension $1$. 
As pointed out to us by the referee, the results in [BG88] hold verbatim for arbitrary distributions of codimension $\ge 2$.
Other good references for the Heisenberg calculus are [EM03, CGGP92, Po08].

\subsection{Brief Overview of the Heisenberg Calculus}

Like any pseudodifferential calculus, the Heisenberg calculus consists of linear operators 
\[ P\;\colon\; C^\infty(M)\to C^\infty(M) ,\]
whose Schwartz kernel $K\in \DD'(M\times M)$ is smooth off the diagonal in $M\times M$, and has an asymptotic expansion $K\sim \sum K^j$ near the diagonal.
The terms $K^j$ in the expansion are homogeneous in a suitable sense (see below),
and the highest order part $K^0$ can be interpreted as a family of convolution operators on the nilpotent tangent groups.

We shall denote by $\Psi^d_H(M)$ the space of Heisenberg pseudodifferential operators of order $d$ on $M$,
and by ${\mathcal S}_H^d(M)$ the space of homogeneous Heisenberg symbols of degree $d$.
Recall that if $\Lg^*=H^*\oplus N^*$ is the dual of the Lie algebra bundle for the the tangent groups, then ${\mathcal S}_H^d(M)$ consists of smooth functions on $\Lg^*\setminus 0$ that are homogeneous of degree $d$ with respect to the natural dilations $\delta_s$ on $\Lg^*_m, m\in M$ defined by
\[ \delta_s(\xi,\theta) = (s\xi,s^2\theta),\quad \xi\in H^*_m,\; \theta\in N^*_m,\; s>0 .\]
The highest order part in the expansion of the Schwartz kernel for an operator $P\in \Psi_H^d(M)$ is a well-defined distribution on the bundle of tangent groups $H\oplus N=\sqcup_{m\in M} G_m$.
The Fourier transform of this distribution (in each fiber) is the principal symbol $\sigma^d_H(P)\in {\mathcal S}^d_H(M)$. 
The full symbolic calculus gives an asymptotic expansion of symbols of the product of two Heisenberg pseudodifferential operators,
and the principal parts multiply according to a suitable product law for homogeneous symbols,
\[ \#\;\colon\;{\mathcal S}^a_H(M)\times {\mathcal S}^b_H(M) \to {\mathcal S}^{a+b}_H(M).\]
The $\#$ product is dual to convolution of homogeneous kernels on the tangent group $G_m=H_m\times N_m$,
and so it is not the usual pointwise product of functions.
If the tangent group is nonabelian the $\#$ product is noncommutative.
However, when we restrict symbols to $H^*\setminus 0\subset \Lg^*\setminus 0$ then the product agrees with pointwise multiplication of functions,
\[ (\sigma_1\# \sigma_2)(m,\xi) = \sigma_1(m,\xi)\sigma_2(m,\xi),\quad m\in M,\,\xi\in H^*_m\setminus 0,\, \sigma_j\in {\mathcal S}_H^{d_j}(M).\]
Hermite operators $I^d_H(M)\subset \Psi^d_H(M)$ are operators whose symbol vanishes on $H^*\setminus 0$.
We denote the algebra of Hermite symbols of order zero as
\[ S^{00}_H(M) = \{\sigma\in S^0_H(M)\;\mid\; \sigma(m,\xi)=0\;\; \forall (m,\xi)\in H^*\setminus 0\}.\] 
The terminology `Hermite ideal' is taken from [EM03] and the notation $S^{00}_H(M)$ is borrowed from [Ta84].
The relevance of the Hermite ideal for our purposes is due to the following lemma.

\begin{lemma}\label{lemma:one}
Let $M$ be a connected closed manifold with distribution 
$H\subseteq TM$.
An idempotent in the Heisenberg calculus $\Psi^\bullet_H(M)$
is either a Hermite operator of order zero, 
or it is of the form $1-x$ 
where $x$ is an idempotent that is a Hermite operator of order zero.
\end{lemma}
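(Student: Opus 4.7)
The strategy is to show that any idempotent $P \in \Psi^\bullet_H(M)$ has order at most zero, and then to analyze its principal symbol at order zero using the identity (stated in the excerpt) that the $\#$-product reduces to pointwise multiplication on $H^*\setminus 0$. Let $d$ denote the order of $P$; if $P$ is smoothing then $\sigma^0_H(P) = 0$ and $P \in I^0_H(M)$ directly. Otherwise $\sigma := \sigma^d_H(P) \in \mathcal{S}^d_H(M)$ is nonzero, and I would assume $d > 0$ for contradiction. Since $P^2 = P$ lies in $\Psi^d_H \subset \Psi^{2d}_H$ with vanishing principal symbol at order $2d$, multiplicativity of the principal symbol gives $\sigma \# \sigma = 0$. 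Restricting to $H^*\setminus 0$, where $\#$ is pointwise, yields $\sigma^2 \equiv 0$ and hence $\sigma \equiv 0$ on $H^*\setminus 0$. Thus $\sigma$ would be a nonzero Hermite symbol of positive degree that is $\#$-nilpotent, and to finish the contradiction I would argue that no such symbol exists by passing to the model convolution algebra on each tangent group $G_m$: the group Fourier transform turns $\#$-nilpotency into operator nilpotency, and a nontrivial square-zero operator of strictly positive homogeneity should be incompatible with the structure of the Heisenberg symbol calculus (analogously to the fact that the Weyl algebra contains no zero divisors).

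Once the reduction $d \le 0$ is in hand, $P \in \Psi^0_H(M)$ and $\sigma := \sigma^0_H(P)$ satisfies $\sigma \# \sigma = \sigma$. Restricting to $H^*\setminus 0$ gives $\sigma^2 = \sigma$ pointwise, so $\sigma$ takes values in $\{0,1\}$ there. Smoothness of $\sigma$ together with connectedness of $H^*\setminus 0$, which follows from connectedness of $M$ and of the fibres $H^*_m\setminus 0 \cong \mathbb{R}^q \setminus 0$ whenever $q = \mathrm{rank}(H) \ge 2$, forces $\sigma \equiv 0$ or $\sigma \equiv 1$ on $H^*\setminus 0$. If $\sigma \equiv 0$ then $\sigma \in S^{00}_H(M)$ and $P \in I^0_H(M)$ is a Hermite operator of order zero. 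If $\sigma \equiv 1$ instead, set $x := 1 - P \in \Psi^0_H(M)$; then $x^2 = 1 - 2P + P^2 = 1 - P = x$ is idempotent, its principal symbol $1 - \sigma$ vanishes on $H^*\setminus 0$, so $x$ is a Hermite idempotent of order zero and $P = 1 - x$, as required.

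The main obstacle is the first step, namely ruling out nonzero $\#$-nilpotent principal symbols of positive degree. In the classical pseudodifferential calculus the analogous claim is immediate because the principal-symbol product is commutative and pointwise, so $\sigma \# \sigma = \sigma^2 = 0$ forces $\sigma = 0$ on the entire cosphere bundle. In the Heisenberg calculus the noncommutativity of $\#$ off $H^*\setminus 0$ means one must use either representation-theoretic input about the tangent groups $G_m$ or a mapping argument on the adapted Sobolev spaces to close the argument; the remaining steps are essentially symbolic bookkeeping plus a connectedness observation.
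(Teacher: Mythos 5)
Your second paragraph --- the order-zero analysis --- is essentially the paper's own proof: restrict the principal symbol to $H^*\setminus 0$, where the $\#$-product is pointwise, so the restriction is $\{0,1\}$-valued; use connectedness of $H^*\setminus 0$ to force it to be identically $0$ or identically $1$; in the first case $P$ is a Hermite operator of order zero, in the second pass to $x=1-P$. (Your remark that connectedness of the fibres needs $\mathrm{rank}\,H\ge 2$ is, if anything, more careful than the paper, which just asserts connectedness.)

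The genuine gap is in your first step, and it is not merely a missing detail: the auxiliary claim you propose to prove --- that there is no nonzero $\#$-nilpotent homogeneous Hermite symbol of positive degree --- is false, so the contradiction cannot be closed along the route you sketch. After the partial Fourier transform in $\xi$, the Hermite symbols at a fixed $(m,\theta)\in S^*N$ form the twisted convolution algebra $C^*(H_m,B_\theta)$, which in the nondegenerate case is isomorphic (via the Bargmann--Fock representation) to the compact operators; this algebra has an abundance of nilpotents, and the analogy with the polynomial Weyl algebra (which indeed has no zero divisors) is the wrong one, since the relevant algebra is the Schwartz-kernel, smoothing-operator completion. Concretely, choose a compatible complex structure in $\pi^*H\to S^*N$ as in the paper's construction of $\sigma_0$, let the value at each $(m,\theta)$ with $\|\theta\|=1$ be the rank-one operator $|0\rangle\langle 1|$ (vacuum against first excited state), and extend by Heisenberg homogeneity of degree $d$; this yields a smooth, nonzero, degree-$d$ Hermite symbol $\sigma$ with $\sigma\#\sigma=0$, for every $d>0$. (Such symbols are exactly the ``higher level'' Hermite elements underlying the generalized Szeg\"o projections of higher level mentioned at the end of the paper.) Hence the principal-symbol equation $\sigma\#\sigma=0$ alone cannot rule out idempotents of exact positive order; one needs the full, all-orders consequences of $P^2=P$, or a separate homogeneity/functional-analytic argument. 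To be fair, the paper itself simply asserts that an idempotent is necessarily of order zero (or $-\infty$) without justification, so you have correctly isolated the one nontrivial point --- but the specific mechanism you propose for it would fail.
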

\begin{proof}
Suppose $P\in \Psi^d_H(M)$ and $P^2=P$.
Then $P$ is necessarily of order zero (or of order $-\infty$, in which case it is also order zero).
Moreover, the restriction of the symbol $\sigma^0_H(P)(m,\xi)$ to $H^*\setminus 0$ is an idempotent function, i.e., it has range $\{0,1\}$.
Since $H^*\setminus 0$ is connected, either $\sigma^0_H(P)(m,\xi)=0$ for all $(m,\xi)$, in which case $P\in I_H^0(M)$, or $\sigma^0_H(P)(m,\xi)=1$
in which case $P=1-x$ with $x\in I_H^0(M)$.
The lemma follows since $(1-x)^2=1-x$ implies $x=x^2$.

\end{proof}

\subsection{Idempotents}

Let us denote by $\sigma^\vee(m,h,\theta)$ the inverse Fourier transform of $\sigma(m,\xi,\theta)$ with respect to the variable $\xi\in H^*$.
We will write $\sigma^\vee(m,\theta)\in \mathcal{S}(H_m)$ for $m\in M, \theta\in N^*\setminus 0$.
Regardless of the geometric nature of the distribution $H$, 
the product of symbols $\sigma^\vee(m,\theta)$ in the Heisenberg calculus can be expressed as `twisted convolution' of functions in the Schwartz class $\mathcal{S}(H_m)$.
In general, convolution twisted by a group cocycle 
$\omega\;\colon \RR^n\times \RR^n\to U(1)$ is given by
\[ (f\ast_\omega g)(x) = \int \omega(x,y)\, f(x-y)g(y) \,dy,\quad f,g\in\mathcal{S}(\RR^n) \]
We can then write succinctly
\[ (\sigma_1\# \sigma_2)^\vee(m,\theta) = (\sigma_1)^\vee(m,\theta) \ast_\omega (\sigma_2)^\vee(m,\theta).\] 
where the convolution is twisted by the cocycle 
\[ \omega_{m,\theta}(h,h')= \exp{(-\frac{i}{2}\theta\circ B_m(h,h'))}.\]
The Schwartz class $\mathcal{S}(H_m)$ is dense in the twisted convolution $C^*$-algebra $C^*(H_m, B_\theta)$, which is a well-known $C^*$-algebra whose structure is easy to describe.
As usual, the $C^*$-algebra of compact operators on separable Hilbert space is denoted $\KK$.

\begin{lemma}\label{lemma:three}
Let $V\cong \RR^p$ be a real vector space,
and $B\colon \Lambda^2 V\to \RR$ a two-form on $V$.
Let $W$ denote the kernel of $B$, i.e., the set of vectors $w$ for which $B(v, w) = 0$ for all  $v\in V$.
If $B$ is not zero, then the twisted convolution algebra $C^*(V, B)$ is isomorphic to $C_0(W^*, \KK)$.
In the degenerate case where $B=0$ and so $V=W$ we have simply $C^*(V, B)\cong C_0(V^*)$. 
\end{lemma}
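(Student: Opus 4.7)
The plan is to decompose $V = W\oplus V'$ for a linear complement $V'$ of the radical $W$, and reduce the statement to two classical inputs applied to the two factors: Pontryagin duality on $W$, and the Stone--von Neumann theorem on $V'$.

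First I would check that the cocycle factors through the $V'$ component. Writing $v_j = w_j + v_j' \in W\oplus V'$, we have $B(v_1,v_2) = B(v_1',v_2')$ because $W = \ker B$, so $\omega(v_1,v_2) = \exp\bigl(-\tfrac{i}{2}B(v_1',v_2')\bigr)$ is pulled back from $V'\times V'$. A direct Fubini computation on pure tensors $f_1\otimes g_1,\, f_2\otimes g_2 \in \mathcal{S}(W)\otimes \mathcal{S}(V')$ then gives
\[ (f_1\otimes g_1)\ast_\omega (f_2\otimes g_2) \;=\; (f_1\ast f_2)\otimes (g_1\ast_{\omega'} g_2), \]
where $\ast$ is ordinary convolution on $W$ and $\omega' = \omega|_{V'\times V'}$. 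Passing to $C^*$-completions (unambiguously, since all algebras in sight are nuclear) yields
\[ C^*(V,B) \;\cong\; C^*(W)\otimes C^*(V', B'), \qquad B' := B|_{V'\times V'}. \]

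Second I would apply the two classical ingredients. Pontryagin duality identifies $C^*(W) \cong C_0(W^*)$ via the Fourier transform. Because $W$ is the full radical of $B$, the restriction $B'$ is nondegenerate, hence a symplectic form on $V'$, and $\dim V' = 2n$ is even. The Stone--von Neumann theorem identifies $C^*(V', B')$ with $\KK(L^2(L))$ for any Lagrangian subspace $L\subset V'$: the Schr\"odinger (Weyl) representation is faithful on the twisted $L^1$-algebra, the Weyl transform sends Schwartz functions to smoothing operators (producing enough rank-one operators), and the $C^*$-closure of the image is precisely the compacts. Combining,
\[ C^*(V,B) \;\cong\; C_0(W^*)\otimes \KK \;\cong\; C_0(W^*,\KK). \]
The degenerate case $B=0$ is subsumed by taking $V'=0$, for which $C^*(V',B')=\CC$ and one recovers $C^*(V,0)\cong C_0(V^*)$.

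\textbf{Main obstacle.} The only substantive input is the Stone--von Neumann identification $C^*(V', B')\cong \KK$; the tensor-product factorization is routine once one observes that $W\subseteq \ker B$ forces $\omega$ to be trivial in the $W$-directions. The one technical point worth checking is that the algebraic identification on Schwartz-dense subalgebras extends continuously to the $C^*$-completions, but this is standard given the nuclearity of both factors, and given that $\RR^p$ is amenable (so the universal and reduced twisted $C^*$-completions agree).
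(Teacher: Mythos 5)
Your proposal is correct and follows essentially the same route as the paper: split off a complement of the radical $W$, apply the Fourier transform (Pontryagin duality) in the $W$-directions, and invoke the Stone--von Neumann identification of the nondegenerate twisted convolution algebra with $\KK$. You simply make explicit some steps the paper leaves implicit (the tensor-product factorization on pure tensors and the passage to $C^*$-completions).
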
   
\begin{proof}
If $B$ is symplectic then $W=\{0\}$ and the twisted convolution algbera $C^*(V, B)$ is the well-known Weyl-algebra, which in its regular representation on $L^2(V)$
corresponds to the algebra of compact operators.
In general, the quotient space $V/W$ is symplectic.
Denote the symplectic form on $V/W$ by $\omega$. 
Choose a complement $V=W\oplus W'$, then $W'$ inherits the symplectic form $\omega$.
The Fourier transform in $W$ then gives an isomorhism
\[ C^*(V, B) \cong C_0(W^*, C^*(W',\omega)).\]
But now $C^*(W',\omega))\cong \KK$, assuming $W'$ is not zero.
The degenerate case $B=0$, on the other hand, is clear.
\end{proof}

\begin{corollary}\label{cor:three}
The $C^*$-algebra $C^*(H_m,B_\theta)$ contains a nonzero idempotent if and only if $B_\theta$ is nondegenerate.
\end{corollary}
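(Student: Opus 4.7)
The plan is to deduce the corollary directly from Lemma \ref{lemma:three} by inspecting when the $C^*$-algebras appearing on the right-hand side admit nonzero idempotents.

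First, suppose $B_\theta$ is nondegenerate. Then its kernel $W$ is trivial, so Lemma \ref{lemma:three} gives $C^*(H_m, B_\theta) \cong C_0(\{0\}, \KK) = \KK$. The compact operators obviously contain nonzero idempotents (any finite rank orthogonal projection), so this direction is immediate.

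Second, suppose $B_\theta$ is degenerate. I would split into two subcases. If $B_\theta = 0$, then Lemma \ref{lemma:three} identifies $C^*(H_m, B_\theta) \cong C_0(H_m^*)$, which is a commutative $C^*$-algebra of functions on a non-empty, connected, non-compact space (here I use that $H$ has positive rank, so $H_m^* \cong \RR^q$ with $q \ge 1$). A nonzero idempotent in this algebra would be the characteristic function of a non-empty compact-open subset, which does not exist by connectedness. If $B_\theta \ne 0$ but degenerate, then $W \ne 0$ and Lemma \ref{lemma:three} gives $C^*(H_m, B_\theta) \cong C_0(W^*, \KK)$ with $W^*$ connected and non-compact. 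Any idempotent $p$ in $C_0(W^*, \KK)$ is a continuous, compact-operator-valued function $p \colon W^* \to \KK$ vanishing at infinity such that $p(x)^2 = p(x)$, i.e.\ each $p(x)$ is a projection in $\KK$. The rank of $p(x)$ is integer-valued and lower-semicontinuous in the norm topology; in fact, since the set of rank-$k$ projections is open in the set of all projections (small norm perturbations of a projection are unitarily equivalent to it), the rank function is locally constant, hence constant on the connected space $W^*$. If this constant rank is positive then $\|p(x)\| = 1$ for every $x$, contradicting $p \in C_0(W^*, \KK)$; hence $p \equiv 0$.

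Combining the two subcases shows that $C^*(H_m, B_\theta)$ has no nonzero idempotent whenever $B_\theta$ is degenerate, completing the proof. The only nontrivial step is the observation that continuous projection-valued functions on a connected space have constant rank, but this is a standard fact and should not present any real obstacle; the rest is a direct application of the preceding lemma.
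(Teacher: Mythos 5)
Your proposal is correct and follows essentially the same route as the paper, which likewise reduces to Lemma \ref{lemma:three} and observes that a nonzero idempotent forces $\|p(x)\|\ge 1$ on a nonempty clopen set, which is incompatible with vanishing at infinity on the connected noncompact space $W^*$ unless $W=\{0\}$; your argument is simply a more detailed version of this one-line observation. (One tiny caveat: a non-self-adjoint idempotent in $\KK$ can have norm strictly greater than $1$, so you should say $\|p(x)\|\ge 1$ rather than $=1$, but this only strengthens the contradiction.)
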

\begin{proof} Idempotents have norm $1$, and so $C_0(W^*, \KK)$ cannot contain any idempotents unless $W=\{0\}$. 
\end{proof} 

Any family of convolution $C^*$-algebras $C^*(\RR^q, \omega_x)$ with continuously varying $U(1)$-valued cocycles $\omega_x$ ,$x\in \RR^n$, constitutes a continuous field over $\RR^n$ [Ri89].
For our purposes, the main import of this fact is that the $C^*$-norms $\|\sigma^\vee(m,\theta)\|$ depend continuously on $(m,\theta)\in S^*N$,
which one can also verify directly without much trouble.

\begin{proposition}\label{prop:one}
Let $(M,H)$ be a connected manifold with distribution.
If the Heisenberg algebra contains an idempotent with infinite dimensional kernel and range, then $M$ is polycontact.
\end{proposition}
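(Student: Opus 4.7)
The plan is to reduce the given idempotent to a nontrivial Hermite idempotent symbol, and then to argue that such a symbol cannot exist whenever the polycontact condition fails. The key ingredients are Lemma \ref{lemma:one}, Corollary \ref{cor:three}, the continuity of fiberwise $C^*$-norms over $S^*N$ noted in the paragraph preceding the proposition, and the elementary $C^*$-algebraic fact that any nonzero idempotent has norm at least one.

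First, I would invoke Lemma \ref{lemma:one} to replace the given idempotent $P$ with a Hermite idempotent of order zero (passing to $1-P$ if necessary). The hypothesis that both the kernel and the range of $P$ are infinite dimensional precludes $P$ from being of finite rank, and hence from being smoothing. The resulting Hermite principal symbol $\sigma\in S_H^{00}(M)$ is therefore nonzero and satisfies $\sigma\#\sigma=\sigma$. Taking the inverse Fourier transform in the covector variable $\xi\in H^*_m$ converts the $\#$-product into fiberwise twisted convolution by the cocycle $\omega_{m,\theta}(h,h')=\exp(-\tfrac{i}{2}\theta\circ B_m(h,h'))$, so that at each $(m,\theta)\in S^*N$ the element $\sigma^\vee(m,\theta)$ is an idempotent in the $C^*$-algebra $C^*(H_m,B_\theta)$. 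By Corollary \ref{cor:three}, $\sigma^\vee(m,\theta)=0$ on the closed set $Z\subset S^*N$ where $\theta\circ B_m$ is degenerate, i.e., on the locus where the polycontact condition fails.

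The crux is to propagate this pointwise vanishing over all of $S^*N$ whenever $Z$ is nonempty. Because every idempotent in a $C^*$-algebra is either zero or has norm at least $1$, the continuous function $(m,\theta)\mapsto\|\sigma^\vee(m,\theta)\|$ takes values only in $\{0\}\cup[1,\infty)$. The vanishing locus $\{\sigma^\vee=0\}=\{\|\sigma^\vee\|<1\}$ is therefore open, and its complement $\{\sigma^\vee\ne 0\}=\{\|\sigma^\vee\|\ge 1\}$ is also open; both are clopen in $S^*N$. Consequently $\sigma^\vee$ is either identically zero or nowhere zero on each connected component of $S^*N$.

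To conclude I would argue by contradiction. Suppose $H$ is not polycontact, so $Z\ne\emptyset$. The relation $(-\theta)\circ B_m=-\theta\circ B_m$ shows $Z$ is invariant under the antipodal involution $\theta\mapsto-\theta$ on $S^*N$. When the corank of $H$ is at least $2$, the fibers $S^*N_m$ are connected spheres and, since $M$ is connected, so is $S^*N$; thus the component meeting $Z$ is all of $S^*N$. In the corank $1$ case, $S^*N$ has at most two components (exchanged by the antipodal map), and the symmetry of $Z$ forces it to meet each. Either way $\sigma^\vee\equiv 0$ on $S^*N$, giving $\sigma=0$ and contradicting the first step. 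The main obstacle is the clopen-plus-connectedness argument at the end, especially the corank $1$ case where antipodal symmetry is needed to rule out a component of $S^*N$ carrying the nontrivial part of $\sigma^\vee$ while $Z$ sits entirely in the other.
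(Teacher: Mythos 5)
Your argument is correct and follows essentially the same route as the paper's own proof: reduce via Lemma \ref{lemma:one} to a nonzero Hermite idempotent symbol, pass to the fiberwise idempotents $\sigma^\vee(m,\theta)$ in $C^*(H_m,B_\theta)$, use continuity of the norm together with Corollary \ref{cor:three} and connectedness of $S^*N$ (with the antipodal symmetry handling corank $1$). Your observation that a general (not necessarily self-adjoint) idempotent only has norm in $\{0\}\cup[1,\infty)$ rather than $\{0,1\}$ is a small but welcome refinement of the paper's wording, and it suffices for the clopen argument.
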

\begin{proof}
By Lemma \ref{lemma:one} it suffices to consider the Hermite ideal $I^0_H$.
Suppose $P\in I^0_H$ is a projector with infinite dimensional kernel and range.
Then the symbol $\sigma=\sigma_H(P)$ of $P$ is a nonzero projector
(or else $P$ would of order $-1$, and hence compact).
Since each $\sigma^\vee(m,\theta)$ is an idempotent in $C^*(H_m,B_\theta)$ the $C^*$-norms $\|\sigma^\vee(m,\theta)\|$ are either $0$ or $1$.
Since the norm $\|\sigma^\vee(m,\theta)\|$ is continuous in $(m,\theta)$ it is a locally constant function.
In corank $p>1$ the sphere bundle $S^*N$ is connected, and it follows that $\sigma^\vee(m,\theta)$ is nonzero for every $(m,\theta)$.
By Corollary \ref{cor:three} this implies that $\theta\circ B_m$ is nondegenerate for all $(m,\theta)$, which means that $M$ is polycontact.

In corank $p=1$ the normal bundle $N$ has one dimensional fiber, and the sphere bundle $S^*N$ may not be connected. 
Nevertheless, since $M$ is connected it still follows that either $B_\theta$ or $B_{-\theta}$ is nondegenerate for every $m\in M$, which, again, implies that the manifold is contact. 

\end{proof}

\subsection{The Structure of the Heisenberg $C^*$-algebra}

To complete our analysis we pass to the norm-completion of the  Heisenberg algebra $\Psi_H^0(M)$.
Order zero operators are bounded on $L^2(M)$,
and the completion of $\Psi_H^0(M)$ in operator norm is a $C^*$-algebra
that we denote by $\Psi^*_H$. 
In general, the $C^*$-algebra $\Psi^*_H$
is a Type I (even CCR) algebra with a finite decomposition series.
In [Dy78], Alexander Dynin announced the result that $\Psi_H^*$ has a decomposition series of length $2$ if $M$ is a contact manifold (codimension one).
Our analysis in the previous section implies that this is true for all polycontact manifolds.

For compact $M$ with distribution $H$ there is the obvious short exact sequence of $C^*$-algebras
\[ 0\to I^*_H \to \Psi^*_H \to C(S^*H) \to 0,\]
where $I^*_H$, $\Psi^*_H$ denote the completions of the algebras $I^0_H(M)$, $\Psi^0_H(M)$ respectively.
We need to decompose $I_H^*$ using the symbol map.
For polycontact manifolds the norm completion of the algebra of Hermite symbols $S^{00}_H(M)$ has a simple description.

\begin{proposition}\label{prop:calc}
Let $(M, H)$ be a compact polycontact manifold.
Then the $C^*$-algebraic completion of the algebra of order zero Hermite symbols $S^{00}_H(M)$, i.e., the quotient $I^*_H/\KK$, is isomorphic to $C(S^*N, \KK)$, the algebra of continuous functions from $S^*N$ to $\KK$. 
\end{proposition}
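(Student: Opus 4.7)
The plan is to exhibit an explicit $C^*$-algebra isomorphism from the $C^*$-completion of $S^{00}_H(M)$ (equivalently $I_H^*/\KK$) to $C(S^*N,\KK)$ by sending a Hermite symbol $\sigma$ to the family $(m,\theta)\mapsto\sigma^\vee(m,\theta)$ on $S^*N$. By the product formula $(\sigma_1\#\sigma_2)^\vee=\sigma_1^\vee\ast_\omega\sigma_2^\vee$ recorded just above, this is a $*$-algebra homomorphism into the algebra of continuous sections of the continuous field of twisted convolution algebras $\{C^*(H_m,B_\theta)\}_{(m,\theta)\in S^*N}$. Injectivity at the symbol level is automatic: if $\sigma^\vee$ vanishes on $S^*N$, then Heisenberg-homogeneity of degree zero under $\delta_s(\xi,\theta)=(s\xi,s^2\theta)$ forces $\sigma^\vee\equiv 0$ on $N^*\setminus 0$, and combined with the Hermite condition $\sigma|_{H^*\setminus 0}=0$ this gives $\sigma\equiv 0$ on $\Lg^*\setminus 0$.

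The second step is to identify the continuous field of algebras globally as $C(S^*N,\KK)$. By the polycontact hypothesis, $B_\theta$ is symplectic on $H_m$ for every $(m,\theta)\in S^*N$, and Lemma~\ref{lemma:three} gives $C^*(H_m,B_\theta)\cong\KK$ fiberwise. To trivialize the resulting continuous $\KK$-bundle, I would choose a continuous family of $B_\theta$-compatible complex structures $J_{m,\theta}$ on $H_m$: the space of compatible complex structures on a symplectic vector space is the Siegel upper half-space, which is contractible, so such a family exists globally as a section of a bundle with contractible fibers. Presenting each $C^*(H_m,B_\theta)$ via its (Schr\"odinger or Fock) representation determined by $J_{m,\theta}$ then produces a continuous global trivialization of the field as $S^*N\times\KK$, so the algebra of continuous sections is precisely $C(S^*N,\KK)$.

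Third, I would verify that the map extends to an isometric $C^*$-homomorphism with dense image. Isometry comes from the standard Heisenberg-calculus fact that the image of a zeroth-order Hermite operator in $I_H^*/\KK$ has norm $\sup_{(m,\theta)\in S^*N}\|\sigma^\vee(m,\theta)\|_{C^*(H_m,B_\theta)}$, since lower-order terms become compact upon completion. Density of the image follows by a partition-of-unity argument on $S^*N$: fiberwise Schwartz-class elements are dense in $\KK$, and smooth sections whose values are compactly supported Schwartz functions of $h\in H_m$ are dense in $C(S^*N,\KK)$; every such section is realized as $\sigma^\vee$ for a suitable homogeneous extension $\sigma\in S^{00}_H(M)$.

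The main obstacle is the global trivialization of the continuous field in step two; in particular one must check that the Schr\"odinger representations assembled from the continuous family $\{J_{m,\theta}\}$ vary continuously in the right sense and are compatible with the twisted convolution structure. The continuous-choice-of-compatible-complex-structure device dispenses with any potential Dixmier--Douady obstruction, but the actual assembly into a single $C(S^*N,\KK)$ requires care. Once this is in hand, the remaining steps are routine consequences of the Fourier-theoretic definition of $\sigma^\vee$ and the standard structure of the Heisenberg symbol calculus.
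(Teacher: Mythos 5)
Your proposal follows essentially the same route as the paper: identify the completion of $S^{00}_H(M)$ with the continuous sections of the field of twisted convolution algebras $C^*(H_m,B_\theta)$ over $S^*N$, use the polycontact hypothesis and a compatible complex structure to represent each fiber as compact operators on a Bargmann--Fock space, and then trivialize. The one place where you part company with the paper is exactly the step you yourself flag as the ``main obstacle,'' and there your argument is not quite complete: a global continuous choice of compatible complex structures $J_{m,\theta}$ on $\pi^*H$ (which does exist, by contractibility of the space of compatible $J$'s) gives you a continuous field of Fock \emph{Hilbert spaces} $H^{BF}_{m,\theta}$ over $S^*N$ and hence a field of elementary $C^*$-algebras $\KK(H^{BF}_{m,\theta})$ with vanishing Dixmier--Douady class, but it does not by itself produce a trivialization $S^*N\times\KK$ --- the Hilbert-space bundle still has to be trivialized. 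The paper closes this gap by invoking Dixmier's result ([Di64], Lemma 10.8.7, in essence Kuiper's theorem) that every continuous field of infinite-dimensional separable Hilbert spaces over such a base is trivial; if you add that citation, your argument is complete. Your additional remarks on injectivity via homogeneity, the isometry of the quotient norm with the fiberwise sup-norm, and density of the image are more explicit than what the paper records, and are correct.
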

\begin{proof}
Let $S^*N$ be the unit sphere bundle of $N^*$ for an arbitrary Euclidean structure.
Let $\pi^*H\to S^*N$ denote the pull-back of the vector bundle $H\to M$ along $\pi\;\colon S^*N\to M$.
The vector bundle $\pi^*H$ has a canonical symplectic structure, with symplectic forms $\theta\circ B_m$ in each fiber $H_{m,\theta}$.
Let $J\in {\rm End}(\pi^*H)$ be a compatible complex structure, 
and let $H^{BF}_{m,\theta}$ be the associated field of Bargmann-Fok Hilbert spaces over $S^*N$.
This field is constructed in exactle the same way as on contact manifolds (see [EM03]).
The Bargmann-Fok representations $\pi_{m,\theta}$ of the algebra of Hermite symbols induce canonical isomorphisms
\[ \pi_{m,\theta}\;\colon\; C^*(H_m, B_\theta) \cong \KK(H^{BF}_{m,\theta}).\]
Therefore the completion $I^*_H/\KK$ of $S^{00}_H(M)$ can be identified with the $C^*$-algebra of sections in the continuous field over $S^*N$ with fibers $\KK(H^{BF}_{m,\theta})$.
But every field of (infinite dimensional) Hilbert spaces over a manifold is isomorphic to a trivial field ([Di64], Lemma 10.8.7), which proves the claim.

\end{proof}

We see that for every compact polycontact manifold we have a decomposition series consisting of only two sequences
\[ 0\to I_H^*\to \Psi_H^*\to C(S^*H)\to 0\]
\[ 0\to \KK\to I_H^*\to C(S^*N, \KK)\to 0\]
generalizing Dynin's result for contact distributions of codimension one.
One easily checks that if $H$ is not polycontact then the decomposition series for the Hermite ideal $I_H^*$ requires more than one sequence.
The geometric condition of ``maximal non-integrability'' of the distribution $H$ is thus {\em equivalent} to a precise structural feature of the Heisenberg $C^*$-algebra.

\begin{corollary}\label{cor:two}
Let $(M, H)$ be a compact polycontact manifold.
Then there exist idempotents in the Hermite ideal $I_H^0(M)$ with infinite dimensional kernel and range.
\end{corollary}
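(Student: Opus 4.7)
The plan is to construct the idempotent in three stages: first produce an idempotent symbol via the Bargmann--Fok model, then correct to an operator that is idempotent modulo smoothing operators, and finally apply holomorphic functional calculus to obtain a genuine idempotent.

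For the symbol, recall from the proof of Proposition \ref{prop:calc} that once a compatible almost complex structure $J$ on the symplectic vector bundle $\pi^*H \to S^*N$ has been chosen, the Bargmann--Fok representations $\pi_{m,\theta}$ identify $C^*(H_m, B_\theta)$ with $\KK(H^{BF}_{m,\theta})$. Such a $J$ exists smoothly on all of $S^*N$ because the polycontact condition ensures that every $B_\theta$ is symplectic and the fibers of the bundle of compatible complex structures on a symplectic vector space are contractible. The canonical vacuum vector $\Omega_{m,\theta} \in H^{BF}_{m,\theta}$ then depends smoothly on $(m,\theta)$, and the rank-one orthogonal projection $e_{m,\theta}$ onto $\CC\Omega_{m,\theta}$ gives a smooth, degree-zero homogeneous field of idempotents over $S^*N$. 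Pulling back through $\pi_{m,\theta}^{-1}$ and inverse-Fourier-transforming in $\xi$ produces a symbol $p \in S^{00}_H(M)$ with $p \# p = p$.

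For the iterative correction, pick any $P_0 \in I_H^0(M)$ with principal symbol $p$. Since $p \# p = p$, one has $P_0^2 - P_0 \in I_H^{-1}(M)$. By the standard lifting procedure in a pseudodifferential calculus, one finds successive corrections $Q_k \in I_H^{-k}(M)$ such that $P_0 + Q_1 + \cdots + Q_k$ is idempotent modulo $I_H^{-k-1}(M)$; asymptotic summation yields $P \in I_H^0(M)$ with $P^2 - P \in \Psi_H^{-\infty}(M)$. Since $P^2 - P$ is smoothing, hence compact on $L^2(M)$, the spectrum of $P$ in $\Psi^*_H$ is contained in a small neighborhood of $\{0, 1\}$. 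Let $\Gamma$ be a small contour enclosing $1$ alone, and set
\[
\tilde P := \frac{1}{2\pi i}\oint_\Gamma (z - P)^{-1}\, dz.
\]
This is an idempotent in $\Psi^*_H$; spectral invariance of the smooth Heisenberg calculus (as used in [EM03]) keeps $(z-P)^{-1}$ in $\Psi^0_H(M)$ for $z \in \Gamma$, so $\tilde P \in \Psi^0_H(M)$, and since $\tilde P - P \in \Psi^{-\infty}_H$ one has in fact $\tilde P \in I^0_H(M)$.

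The projection $\tilde P$ has principal symbol $p \ne 0$, so by the exact sequence $0 \to \KK \to I^*_H \to C(S^*N, \KK) \to 0$ it is non-compact and hence has infinite-dimensional range. Dually, $1 - \tilde P$ has classical principal symbol identically equal to $1$ on $H^* \setminus 0$, so by $0 \to I^*_H \to \Psi^*_H \to C(S^*H) \to 0$ it too is non-compact, showing that $\ker \tilde P$ is infinite-dimensional. The main obstacle is the symbol step, which uses the polycontact hypothesis in an essential way to make the Bargmann--Fok field globally defined; the two analytic stages thereafter follow the standard template.
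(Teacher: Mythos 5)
Your proof is correct, but it takes a more constructive route than the paper's own argument for this corollary. The paper's proof is a short structural one: by Proposition \ref{prop:calc} the quotient $I_H^*/\KK$ is $C(S^*N,\KK)$, which visibly contains nonzero projections; these lift to projections in $I_H^*$ along the extension by $\KK$, and closure of $\Psi^0_H(M)$ under holomorphic functional calculus then places a non-compact projection in the dense subalgebra $I_H^0(M)$. You instead build an explicit representative: the Bargmann--Fok vacuum projection as a homogeneous Hermite symbol, an asymptotic correction to an operator idempotent modulo $\Psi_H^{-\infty}$, and a contour integral to make it exactly idempotent. This is essentially the paper's \emph{later} construction of the generalized Szeg\"o projection, so your argument buys an explicit projector (and your verification that both $\tilde P$ and $1-\tilde P$ are non-compact, via the two exact sequences, is more careful than the paper's, which leaves the kernel half implicit), at the cost of more machinery than the corollary strictly needs. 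One small imprecision: $P^2-P$ compact forces only the \emph{essential} spectrum of $P$ into $\{0,1\}$; the rest of $\sigma(P)$ consists of isolated eigenvalues of finite multiplicity, so your contour $\Gamma$ exists and yields an idempotent, but the phrase ``contained in a small neighborhood of $\{0,1\}$'' should be weakened accordingly (the finitely many stray eigenvalues only perturb $\tilde P$ by finite rank, so the conclusion is unaffected). Both approaches use the polycontact hypothesis in the same essential place, namely the identification of each fiber $C^*(H_m,B_\theta)$ with the compacts.
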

\begin{proof}
Clearly there are nonzero projections in $I_H^*/\KK\cong C(S^*N, \KK)$,
and therefore in $I_H^*$.
Since the pseudodifferential algebra $\Psi^0_H(M)$ is closed under holomorphic functional calculus it follows that non-compact projections exist in the dense subalgebra $I_H^0(M)\subset I^*_H$. 
\footnote{The fact that the Heisenberg pseudodifferential algebra is closed under holomorphic functional calculus follows from results in [Po08].
We thank the referee for the reference.}
\end{proof}

\subsection{Generalized Szeg\"o Projections}

Because of the strong structural similarities of the Heisenberg algebra of polycontact and contact manifolds, the construction of generalized Szeg\"o projections on contact manifolds of Boutet de Monvel and Guillemin [BG81] or of Epstein and Melrose [EM03] carries over without significant change to polycontact manifolds.
The main import of Theorem \ref{thm:main} is perhaps that ``maximal non-integrability'' of the distribution $H$ is equivalent to the existence of such generalized Szeg\"o projections.

As we have shown, the norm completion of the algebra of Hermite symbols $S^{00}_H(M)$ is naturally identified with a continuous field over the compact space $S^*N$ with fibers $C^*(H_m,B_\theta)$.
As above, let $J$ be a complex structure in the pull-back vector bundle $\pi^*H\to S^*N$ compatible with the canonical symplectic forms $\theta\circ B_m$ for each $(m,\theta)\in S^*N$.
Then an explicit projection in the twisted convolution algebra $C^*(H_m,B_\theta)$ is given by the formula
\[ \sigma_0^\vee(m,h,\theta) = \frac{1}{(2\pi)^n} \exp{(-\frac{1}{4}\theta\circ B_m(h,Jh)))}\qquad m\in M,\, h\in H_m,\, \theta\in S(N^*)\]
(In the Bargmann-Fok representation of $C^*(H_m,B_\theta)$ this is the projection onto the vacuum vector.)
Let $g(m,\theta)\ang{-,-}$ denote the Euclidean inner product in $H^*_m$ dual to $\theta(B_m(-,J-))$.
Observe that $g(m,t\theta)=t^{-1}g(m,\theta)$.
The $h\mapsto \xi$ Fourier transform of $\sigma_0^\vee$ defines an idempotent symbol $\sigma_0\in S^{00}_H(M)$, 
\[ \sigma_0(m,\xi,\theta) =
\begin{cases} 2^n \exp{(-g(m,\theta)\ang{\xi,\xi})} &\theta\ne 0 \\
0 &\theta=0
\end{cases} 
\]
A generalized Szeg\"o projection $S_0\in \Psi^0(M)$
with symbol $\sigma_0$ is constructed following the usual procedure, which, just as our proof of Corollary \ref{cor:two},  relies on the closure under holomorphic functional calculus of $\Psi_H^0(M)$.

One can similarly generalize the Szeg\"o projections of ``higher levels'' defined in [EM03] to polycontact manifolds.

\vskip 6pt
\noindent {\bf Remark.}
According to Proposition \ref{prop:conf}, if all the tangent groups are of Heisenberg type, then there exists a conformal structure for the distribution $H$ that is compatible with the polycontact structure.
For example, a strictly pseudoconvex CR structure of codimension $1$ gives rise to a contact structure with compatible conformal structure.
Likewise, on a quaternionic contact manifold one often considers a compatible conformal structure in $H$.

We have seen that such a conformal structure, if it exists, induces a compatible  conformal structure in the normal bundle $N$.
Given such a conformal structure we then simply have
\[ \sigma_0(m,\xi,\theta) =
\begin{cases} 2^n \exp{(-\|\xi\|^2\,/\,\|\theta\|)} &\theta\ne 0 \\
0 &\theta=0
\end{cases} 
\]
and the Szeg\"o projection is uniquely determined up to a compact perturbation.

\section*{References}

\def\item{\vskip2.75pt
plus1.375pt minus.6875pt\noindent\hangindent1em} \hbadness2500
\tolerance 2500
\markboth{References}{References}

\item{[Ba02]} J.\ Baez,
{\sl The octonions},
Bull.\ Amer.\ Math.\ Soc.\ 39 (2002), 145--205

\item{[BG88]} R.\ Beals and P.\ Greiner,
{\sl Calculus on Heisenberg Manifolds},
Annals of Mathematics Studies (119), Princeton, 1988.

\item [BG81] L.\ Boutet de Monvel and V.\ Guillemin,
{\sl The spectral theory of Toeplitz operators},
Ann.\ Math.\ Studies 99, Princeton Univ.\ Press, 1981.

\item [BGM93] 
C.\ Boyer, K.\ Galicki, B.\ Mann, {\sl $3$-Sasakian manifolds},
Proc.\ Japan Acad.\ Ser.\ A Math.\ Sci.\ 69 (1993), no. 8, 335--340.

\item [Bi99] O.\ Biquard, {\sl Quaternionic contact structures},
Quaternionic structures in mathematics and physics, Univ. Studi Roma ``La Sapienza'', Rome, 1999.

\item{[CDKR91]} M.\ Cowling, A.\ H.\ Dooley, A.\ Kor\'anyi, and F.\ Ricci,
{\sl $H$-type groups and Iwasawa decompositions},
Adv.\ Math.\, 87, 1--41  (1991)

\item{[CGGP92]}, M.\ Christ, D.\ Geller, P.\ Glowacki, L.\ Polin,
{\sl Pseudodifferential operators on Groups with dilations},
Duke Math.\ J\. 68, 31--65  (1992)

\item{[CS03]} J.\ H.\ Conway and D.\ A.\ Smith,
{\it On Quaternions and Octonians: their geometry, arithmetic, and symmetry},
A.\ K.\ Peters, 2003.

\item{[Di64]} J.\ Dixmier, 
{\sl $C^*$-alg\`ebres et leurs repr\'esentations},
Gauthier-Villars and Cie, Paris, 1964.

\item [Dy78] A.\ Dynin,
{\sl Inversion problem for singular integral operators: $C^*$-approach},
Proc.\ Natl.\ Acad.\ Sci.\ USE, Vol. 75, No. 10, October 1978, 4668--4670.

\item{[EM03]} C.\ Epstein and R.\ Melrose,
{\sl The Heisenberg algebra, index theory and homology},
preprint, 2003.

\item {[Fi11]} S.\ Fitzpatrick, {\sl private communication}, 2011

\item{[FS74]} G.\ B.\ Folland and E.\ M.\ Stein,
{\sl Estimates for the $\bar{\partial}_b$ complex and analysis on the Heisenberg group},
Comm.\ Pure and Appl.\ Math.\, vol XXVII (1974), 429--522.


\item{[GT95]} H.\ Geiges and C.\ B.\ Thomas,
{Hypercontact manifolds},
J.\ London Math.\ Soc.\ (2)  51  (1995),  no. 2, 342--352.



\item {[Hu66]} D.\ Husemoller, {\sl Fibre bundles}, McGraw-Hill, 1966

\item [IV] S.\ Ivanov and D.\ Vassilev,
{\sl Conformal quaternionic contact curvature and the local sphere theorem},
preprint, arXiv:0707.1289v2 [math.DG]


\item {[Ka80]}
A.\ Kaplan,
{\sl Fundamental solutions for a class of hypoelliptic PDE generated by composition of quadratic forms},
Trans.\ Amer. Math.\ Soc.\ 258, nr. 1 (1980)

\item{[Ka83]} A.\ Kaplan,
{\sl The geometry of groups of Heisenberg type},
Bull.\ London Math.\ Soc., 35--42 (1983)

\item [Ku70]
Y.\ Kuo, {\sl On almost contact $3$-structure},
T\^ohoku Math.\ J.\ (2), 22, 325--332 (1970)

\item [LM89]
H.\ Lawson and M.\ Michelson,
{\sl  Spin Geometry},
Princeton University Press, Princeton, 1989. 
 
\item{[LT99]} F.\ Levstein and A.\ Tiraboschi, 
{\sl Classes of 2-step nilpotent Lie algebras}, 
Comm.\ in Algebra 27, 2425--2440  (1999)

\item[Mo73]
G.\ D.\ Mostow, {\sl Strong rigidity of locally symmetric spaces},
Annals of Math.\ Studies 78, Princeton Univ.\ Press, 1973.  


\item {[Po08]} R.\ Ponge,
{\sl Heisenberg calculus and spectral theory of hypoelliptic operators on Heisenberg manifolds},
Mem.\ Amer.\ Math.\ Soc.\, vol. 194, nr. 906, AMS, 2008.

\item {[Ri89]}
M.\ Rieffel,
{\sl Continuous fields of $C^*$-algebras coming from group cocycles and actions},
Math.\ Ann.\ 283, 631--643 (1989)

\item [Sm94] H.\ Smith,
{\sl A calculus for three dimensional CR manifolds of finite type},
J.\ Funct.\ Anal.\ 120, 135--162 (1994)

\item{[Ta84]} M.\ E.\ Taylor,
{\sl Noncommutative microlocal analysis, part I},
Mem.\ Amer.\ Math.\ Soc.\, vol. 313, AMS, 1984.

\item{[We80]} A.\ Weinstein,
{\sl Fat bundles and symplectic manifolds},
Adv.\ Math.\, 37, 239--250  (1980)

\newpage

\end{document}